\documentclass[10pt,twoside]{article}
\usepackage{mathrsfs}
\usepackage{amsmath}
\usepackage{amssymb}
\usepackage{fancyhdr}
\usepackage{latexsym}
\usepackage{bbding}
\usepackage{mathrsfs}
\usepackage{wasysym}

\usepackage{multicol,graphics}

\setcounter{MaxMatrixCols}{10}
\newtheorem{theorem}{Theorem}[section]
\newtheorem{lemma}[theorem]{Lemma}
\newtheorem{corollary}[theorem]{Corollary}

\numberwithin{equation}{section}
\newenvironment{proof}[1][Proof]{\noindent\textbf{#1.} }{\hfill $\Box$}
\allowdisplaybreaks

 \makeatletter\setlength{\textwidth}{15.0cm}
  \setlength{\oddsidemargin}{1.0cm}
\setlength{\evensidemargin}{1.0cm} \setlength{\textheight}{21.0cm}

\begin{document}
\title{{A Beale-Kato-Majda Blow-up Criterion for a Hydrodynamic System Modeling Vesicle and Fluid Interactions}\footnote{This work is partially supported by the National Natural Science Foundation of
China (11171357).}}
\author{Jihong Zhao\footnote{E-mail: zhaojihong2007@yahoo.com.cn.}\\
[0.2cm] {\small  Institute of Applied Mathematics, College of
Science, Northwest A\&F
University,}\\
{\small Yangling, Shaanxi 712100,  People's Republic of China}}
\date{}
\maketitle

\begin{abstract}
In this paper, we establish an analog of the Beale-Kato-Majda type
criterion for singularities of smooth solutions of a hydrodynamic
system modeling vesicle and fluid interactions. The result shows
that the maximum norm of the vorticity alone controls the breakdown
of smooth solutions.

\textbf{Keywords}: Phase field;  vesicle membrane; fluid
interaction; Navier-Stokes equations; blow-up criterion

\textbf{2010 AMS Subject Classification}: 35B65, 35Q30, 76D09, 76T10
\end{abstract}

\section{Introduction}

During the past several decades, there have been many experimental
and mathematical investigations focusing on  the formation and
dynamics of elastic vesicle membranes
\cite{ALV02,C02,DBV97,KS05,MSWD94,OH89,S93,SL95}. The single
component vesicles are elastic membranes containing a liquid and
surrounded by another liquid, which are possibly the simplest models
for the biological cells and molecules. Such vesicles can be formed
by certain amphiphilic molecules assembled in water to build
bilayers, and exhibit a rich set of geometric structures in various
mechanical, physical and biological environment \cite{DLW04,M05}.
Their equilibrium shapes can be characterized by minimizing the
following bending elastic energy of the membranes \cite{H73}:
\begin{equation}\label{eq1.1}
  E=\int_{\Gamma}\frac{k}{2}(H-c_{0})^{2}dS,
\end{equation}
where $\Gamma$ is the surface of vesicle membrane,
$H=\frac{k_{1}+k_{2}}{2}$ is the mean curvature of the membrane
surface with $k_{1}$ and $k_{2}$ as the principle curvatures,
$c_{0}$ is the spontaneous curvature which arises due to
inhomogeneities in the bilayer lipid membrane structure, and $k$ is
the bending modulus of the vesicle membrane.


In \cite{DLL07,DLW04}, by using the phase field approach, the
authors introduced the phase field Navier-Stokes vesicle fluid
interaction model for the vesicle shape dynamics and conducted
numerical simulations of the vesicle membrane deformation in flow
fields (see \cite{DLRW051,DLRW09,WD08} for further studies). In this
model, the vesicle membrane is described by a phase function $\phi$,
which is a labeling function defined on computational domain $Q$.
The function $\phi$ takes value $+1$ inside of the vesicle membrane
and $-1$ outside, with a thin transition layer of width
characterized by a small (compared to the vesicle size) positive
parameter $\varepsilon$. Obviously, the vesicle membrane $\Gamma$
coincides with the zero level set $\{x: \phi(x)=0\}$. The
convergence of the phase field model to the original sharp interface
model as the transition width of the diffuse interface
$\varepsilon\rightarrow0$ has been carried out in \cite{DLRW05}. On
the other hand, the viscous fluid is modeled by the incompressible
Navier-Stokes equations with unit density and with an external force
defined in terms of $\phi$.

As in \cite{DLL07}, for simplicity, we assume that $k$ is a positive
constant and $H_{0}=0$. The elastic bending energy \eqref{eq1.1}
will be approximated by a modified Willmore energy (cf.
\cite{DLW04})
\begin{equation}\label{eq1.2}
  E_{\varepsilon}(\phi)=\frac{k}{2\varepsilon}\int_{Q}|f(\phi)|^{2}dx
  \ \ \text{with}\ \
  f(\phi)=-\varepsilon\Delta\phi+\frac{1}{\varepsilon}(\phi^{2}-1)\phi,
\end{equation}
which depends on the interface transitional thickness $\varepsilon$.
Moreover, in order to keep the total volume and the surface area of
the vesicle membrane are conserved in time, two constraint
functionals for the vesicle volume and surface area are prescribed
by (cf. \cite{DLW04})
\begin{equation}\label{eq1.3}
  A(\phi)=\int_{Q}\phi \;dx, \ \
  B(\phi)=\int_{Q}\Big(\frac{\varepsilon}{2}|\nabla\phi|^{2}+\frac{1}{4\varepsilon}(|\phi|^{2}-1)^{2}\Big)dx.
\end{equation}
To enforce these constraints, two penalty terms were added to the
elastic bending energy $E_{\varepsilon}(\phi)$, and the approximate
elastic bending energy is given by (cf. \cite{DLW05,DLW06})
\begin{equation}\label{eq1.4}
  E(\phi)=E_{\varepsilon}(\phi)+\frac{1}{2}M_{1}(A(\phi)-\alpha)^{2}+\frac{1}{2}M_{2}(B(\phi)-\beta)^{2},
\end{equation}
where $M_{1}$ and $M_{2}$ are two penalty constants,
$\alpha=A(\phi_{0})$ and $\beta=B(\phi_{0})$ are determined by the
initial value of the phase function $\phi_{0}$.

In this paper, we consider the three dimensional phase field
Navier-Stokes vesicle fluid interaction model subjecting to the
periodic boundary conditions (i.e., in torus $\mathbb{T}^{3}$),
which reads as follows:
\begin{align}\label{eq1.5}
 &{\partial_{t}}u+u\cdot\nabla u+\nabla{P}=\mu\Delta u+\frac{\delta E(\phi)}{\delta\phi}\nabla\phi\ \ &\text{in}\ \ Q\times[0,T],\\
\label{eq1.6}
 &\nabla\cdot u=0\ \ &\text{in}\ \ Q\times[0,T],\\
\label{eq1.7}
 &\partial_{t}\phi+u\cdot\nabla\phi=-\gamma\frac{\delta
 E(\phi)}{\delta\phi}\ \ &\text{in}\ \ Q\times[0,T]\;
\end{align}
with the initial condition
\begin{equation}\label{eq1.8}
  u(x,0)=u_{0}(x) \ \text{with}\ \nabla\cdot u_{0}=0\  \ \text{and}\ \ \phi(x,0)=\phi_{0}(x)\ \
  \text{for}\ \
  x\in Q,
\end{equation}
and the boundary condition
\begin{equation}\label{eq1.9}
  u(x+e_{i},t)=u(x,t), \quad \phi(x+e_{i},t)=\phi(x,t )\ \
  \text{for}\ \
  x\in\partial Q\times[0,T],
\end{equation}
where the set of vectors
$\{e_{1}=(1,0,0),e_{2}=(0,1,0),e_{3}=(0,0,1)\}$ denotes an
orthonormal basis of $\mathbb{R}^{3}$ and $Q$ is the unit square in
$\mathbb{R}^{3}$. Here $u=(u_{1},u_{2}, u_{3})\in\mathbb{R}^{3}$ and
$P\in\mathbb{R}$ denote, respectively, the velocity field and the
pressure of the fluid, $\phi\in\mathbb{R}$ is the phase function of
the vesicle membrane. $\frac{\delta E(\phi)}{\delta\phi}$ is the
so-called chemical potential that denotes the variational derivative
of $E(\phi)$ in the variable $\phi$.  $\gamma$ denotes the mobility
coefficient which is assumed to be a small positive constant. Note
that, if we denote
\begin{equation}\label{eq1.10}
  g(\phi)=-\Delta f(\phi)+\frac{1}{\varepsilon^{2}}(3\phi^{2}-1)f(\phi),
\end{equation}
then
\begin{align}\label{eq1.11}
  \frac{\delta
  E(\phi)}{\delta\phi}&=kg(\phi)+M_{1}(A(\phi)-\alpha)+M_{2}(B(\phi)-\beta)f(\phi)\nonumber\\
  &=k\varepsilon\Delta^{2}\phi-\frac{k}{\varepsilon}\Delta(\phi^{3}-\phi)-\frac{k}{\varepsilon}(3\phi^{2}-1)\Delta\phi+
  \frac{k}{\varepsilon^{3}}(3\phi^{2}-1)(\phi^{2}-1)\phi\nonumber\\
  &\ \ \ +M_{1}(A(\phi)-\alpha)+M_{2}(B(\phi)-\beta)f(\phi).
\end{align}

The system \eqref{eq1.5}--\eqref{eq1.7} describes the evolution of
vesicle membranes immersed in an incompressible viscous fluid.
Equations \eqref{eq1.5} and \eqref{eq1.6} are the momentum
conservation and the mass conservation equations of a viscous fluid
with unit density and with an external force caused by the phase
field $\phi$. Equation \eqref{eq1.6} is the condition of
incompressibility. Equation \eqref{eq1.7} is a relaxed transport
equation of $\phi$ with advection by the velocity field $u$. The
right-hand side of \eqref{eq1.7} is a regularization term which
ensures the consistent dissipation of energy. Roughly speaking, the
system \eqref{eq1.5}--\eqref{eq1.7} is governed by the coupling of
the hydrodynamic fluid flow and the bending elastic properties of
the vesicle membrane. The resulting membrane configuration and the
flow field reflect the competition and the coupling of the kinetic
energy and membrane elastic energies.

Based on the following basic energy law:
\begin{equation}\label{eq1.12}
  \frac{d}{dt}\Big(\frac{1}{2}\|u(\cdot,t)\|_{L^{2}}^{2}+E(\phi(\cdot,t))\Big)+\mu\|\nabla u(\cdot,t)\|_{L^{2}}^{2}
  +\gamma\|\frac{\delta E(\phi)}{\delta\phi}\|_{L^{2}}^{2}=0,\ \ \forall\
  t>0,
\end{equation}
the global existence of weak solution to the system
\eqref{eq1.5}--\eqref{eq1.7} subject to no-slip boundary condition
for the velocity field and Dirichlet boundary condition for the
phase function has been established in \cite{DLL07} via the Galerkin
argument. Moreover, the authors also proved the weak solution is
unique under an additionally regularity assumption $u\in L^{8}(0,T;
L^{4}(Q))$. Recently, local in time existence and uniqueness of
strong solution to the system \eqref{eq1.5}--\eqref{eq1.7} have been
established in \cite{LTT12}, and under the assumption that the
initial data and the quantity $(|\Omega|+\alpha)^{2}$ are
sufficiently small, the authors proved existence of almost global
strong solutions. Note that they have to restrict the working space
with proper limited regularity due to some compatibility conditions
at the boundary which is required in the fixed point strategy. Very
recently, Wu and Xu \cite{WX12} considered the system
\eqref{eq1.5}--\eqref{eq1.7} with periodic boundary conditions to
avoid troubles caused by the boundary terms when performing
integration by parts. They proved that, for any given initial data
$(u_{0}, \phi_{0})\in H^{1}_{per}(Q)\times H^{4}_{per}(Q)$, there
exists a positive time $T$ such that the system
\eqref{eq1.5}--\eqref{eq1.9} admits a unique smooth solution $(u,
\phi)$ satisfying
\begin{equation}\label{eq1.13}
\begin{cases}
  u\in C([0,T], H^{1}_{per}(Q))\cap L^{2}(0,T; H^{2}_{per}(Q))\cap H^{1}(0,T; L^{2}_{per}(Q)),\\
  \phi \in C([0,T], H^{4}_{per}(Q))\cap L^{2}(0,T; H^{6}_{per}(Q))\cap
  H^{1}(0,T; H^{2}_{per}(Q)).
\end{cases}
\end{equation}
Moreover, if the viscosity $\mu$ is assumed to be properly large,
then the system \eqref{eq1.5}--\eqref{eq1.9} admits a unique global
strong solution that is uniformly bounded in $H^{1}_{per}\times
H^{4}_{per}$ on $[0,\infty)$. However, as for the well-known
Navier-Stokes equations, an outstanding open problem is whether or
not smooth solution of \eqref{eq1.5}--\eqref{eq1.9} on $[0,T)$ will
lead to a singularity at $t=T$.  For this purpose, they established
some regularity criteria for local smooth solutions in terms of the
velocity field only. More precisely, they proved that if one of the
following conditions holds:
\begin{align}\label{eq1.14}
  &(i)\ \ \int_{0}^{T}\|u(\cdot, t)\|_{L^{r}}^{q}\;dt<\infty\
  \ \  \text{with}\ \
  \frac{3}{r}+\frac{2}{q}\leq 1,\ 3< r\leq \infty,\\
\label{eq1.15}
  &(ii) \ \ \int_{0}^{T}\|\nabla u(\cdot,t)\|_{L^{r}}^{q}\;dt<\infty\
  \ \  \text{with}\ \
  \frac{3}{r}+\frac{2}{q}\leq 2,\ \frac{3}{2}< r\leq \infty,
\end{align} then the solution $(u,\phi)$ can be extended beyond the time $T$.
Some improved logarithmical-type regularity criteria as for the
conventional Navier-Stokes equations were also established in
\cite{WX12}, and the stability of the system
\eqref{eq1.5}--\eqref{eq1.9} near local minimizers of the elastic
bending energy were obtained by using the {\L}ojasiewicz-Simon
inequality.

For the Navier-Stokes equations, the regularity criteria
\eqref{eq1.14} and \eqref{eq1.15} were obtained by Prodi \cite{P59},
Serrin \cite{S62}, Giga \cite{G86} and Beir\~{a}o da Veiga
\cite{B95}. In order to replace $\nabla u$ by the vorticity
$\nabla\times u$ in the condition \eqref{eq1.15}, the case
$1<r<\infty$ is a simple case because the Biot-Savart law and the
boundness of the Riesz transforms on $L^{r}$. However, the marginal
case $r=\infty$ is very difficult due to the lack of continuity of
Riesz transforms on $L^{\infty}$. In 1984, Beale-Kato-Majda in their
pioneering work \cite{BKM84} got around this difficulty by using the
logarithmic Sobolev inequality and showed that if the smooth
solution $u$ blows up at the time $t=T$, then
\begin{equation}\label{eq1.16}
  \int_{0}^{T}\|\omega(\cdot,
  t)\|_{L^{\infty}}\;dt=\infty,
\end{equation}
where $\omega=\nabla\times u$ is the vorticity of the velocity
field. Later, Kozono-Taniuchi \cite{KT00} and Konozo-Ogawa-Taniuchi
\cite{KOT02} refined \eqref{eq1.16} to
\begin{equation}\label{eq1.17}
  \int_{0}^{T}\|\omega(\cdot, t)\|_{BMO}\;dt=\infty\
  \ \text{and}\ \ \int_{0}^{T}\|\omega(\cdot, t)\|_{\dot{B}^{0}_{\infty,
  \infty}}\;dt=\infty,
\end{equation}
respectively, where $BMO$ is the space of  \textit{Bounded Mean
Oscillation} and $\dot{B}^{0}_{\infty, \infty}$ is the homogeneous
Besov spaces.

Inspired by \cite{BKM84} and \cite{WX12}, the purpose of this paper
is to establish an analog of Beale-Kato-Majda's criterion for
singularities of smooth solutions to the system
\eqref{eq1.5}--\eqref{eq1.9}. Our result shows that the maximum norm
of the vorticity alone controls the breakdown of smooth solutions,
and more important, it reveals that the velocity field $u$ plays a
more dominant role than the phase function $\phi$ in the regularity
theory of solutions to the phase field Navier-Stokes vesicle-fluid
interaction system \eqref{eq1.5}--\eqref{eq1.9}. As in \cite{BKM84},
the proof will be accomplished in three steps: obtaining $L^{2}$
estimates for the vorticity $\omega$ and $\nabla\Delta\phi$,
obtaining higher energy estimates for the solution $(u,\phi)$, and
applying the crucial logarithmic Sobolev inequality.

Before stating our main result, we recall some well-established
functional settings for periodic problems: For $1\leq r\leq \infty$,
we denote by
\begin{equation*}
  L^{r}_{per}(Q):=\{u\in L^{r}(\mathbb{R}^{3})\ |\  u(x+e_{i})=u(x)\}
\end{equation*}  equipped the usual norm
\begin{equation*}
 {\|u\|_{L^{r}}=}
\begin{cases}
  \big(\int_{Q}|u(x)|^{r}dx\big)^{1/r}\ \ &\mbox{if}\;\; 1\leq r<\infty,\\
  \text{ess}\sup_{x\in Q} |u(x)|\ \ &\mbox{if}\;\; r=\infty.
\end{cases}
\end{equation*}
 For an interger $m>0$, we denote by
\begin{equation*}
  H^{m}_{per}(Q):=\{u\in H^{m}(\mathbb{R}^{3})\ |\ u(x+e_{i})=u(x)\}
\end{equation*} equipped with the usual norm
$\|u\|_{H^{m}}=\sum_{i=0}^{m}\|\nabla^{i}u\|_{L^{2}}$.

\medskip

Our main result reads as follows:

\begin{theorem}\label{th1.1}
Assume that $(u_{0}, \phi_{0})\in H^{3}_{per}(Q)\times
H^{6}_{per}(Q)$ with $\nabla\cdot u_{0}=0$. Let  $T_{*}>0$ be the
maximum existence time such that the system
\eqref{eq1.5}--\eqref{eq1.9} has a unique strong solution $(u,\phi)$
on $[0,T_{*})$. If $T_{*}<\infty$, then
\begin{align}\label{eq1.18}
\int_{0}^{T_{*}}\|\omega(\cdot,t)\|_{L^{\infty}}\;dt=\infty.
\end{align}
In particular,
\begin{align*}
  \limsup_{t\nearrow T_{*}}\|\omega(\cdot,t)\|_{L^{\infty}}=\infty.
\end{align*}
\end{theorem}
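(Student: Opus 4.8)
The plan is to argue by contradiction: suppose $T_*<\infty$ but $\int_{0}^{T_*}\|\omega(\cdot,t)\|_{L^\infty}\,dt=:A<\infty$. By the local existence and continuation theory underlying the very definition of $T_*$, it suffices to establish the a priori bound
\[
  \sup_{0\le t<T_*}\big(\|u(\cdot,t)\|_{H^3}+\|\phi(\cdot,t)\|_{H^6}\big)<\infty ,
\]
since then $(u,\phi)$ can be continued past $T_*$, contradicting maximality. From the start we use the basic energy law \eqref{eq1.12}, which gives $u\in L^\infty(0,T_*;L^2_{per})\cap L^2(0,T_*;H^1_{per})$ and, unraveling the structure of $E(\phi)$ in \eqref{eq1.4}--\eqref{eq1.11} together with $H^1_{per}\hookrightarrow L^6$, the bounds $\phi\in L^\infty(0,T_*;H^2_{per})\cap L^2(0,T_*;H^4_{per})$. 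We also use freely that on $\mathbb{T}^3$ one has $\|\nabla^{k+1}u\|_{L^2}\le C\|\nabla^k\omega\|_{L^2}$ for divergence-free $u$.

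\emph{Step 1 (first-order energy estimate).} Writing $w:=\delta E(\phi)/\delta\phi$, taking the curl of \eqref{eq1.5} yields $\partial_t\omega+u\cdot\nabla\omega-\omega\cdot\nabla u=\mu\Delta\omega+\nabla w\times\nabla\phi$, while \eqref{eq1.7} exhibits, modulo lower-order terms, the fourth-order parabolic smoothing $\partial_t\phi+\gamma k\varepsilon\Delta^2\phi=\cdots$. Testing the vorticity equation with $\omega$ and the $\phi$-equation with $\Delta^3\phi$ (which produces $\frac{d}{dt}\|\nabla\Delta\phi\|_{L^2}^2$ up to sign), and setting $\mathcal{E}_1(t):=\|\omega(\cdot,t)\|_{L^2}^2+\|\nabla\Delta\phi(\cdot,t)\|_{L^2}^2$, one bounds the vortex-stretching term by $C\|\omega\|_{L^\infty}\|\omega\|_{L^2}^2$, and the coupling terms --- $\nabla w\times\nabla\phi$ tested against $\omega$, and $u\cdot\nabla\phi$ tested against high derivatives of $\phi$ --- by integration by parts, Gagliardo--Nirenberg interpolation, the already-available bound $\phi\in L^\infty(0,T_*;H^2_{per})$, and Young's inequality, absorbing the top-order derivatives into the dissipations $\mu\|\nabla\omega\|_{L^2}^2$ and $\gamma k\varepsilon\|\nabla\Delta^2\phi\|_{L^2}^2$. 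This produces a differential inequality of the form
\[
  \frac{d}{dt}\mathcal{E}_1+\mu\|\nabla\omega\|_{L^2}^2+\gamma k\varepsilon\|\nabla\Delta^2\phi\|_{L^2}^2\le C\big(1+\|\omega\|_{L^\infty}\big)\mathcal{E}_1+C ,
\]
so Grönwall's inequality and $A<\infty$ give $\sup_{[0,T_*)}\mathcal{E}_1<\infty$ and finite dissipation integrals; in particular $u\in L^\infty(0,T_*;H^1_{per})\cap L^2(0,T_*;H^2_{per})$ and $\phi\in L^\infty(0,T_*;H^3_{per})\cap L^2(0,T_*;H^5_{per})$.

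\emph{Step 2 (higher energy estimate and the logarithmic Sobolev inequality).} Next I would carry out the analogous estimate at the level of the data, controlling $\mathcal{E}_2(t):=\|u(\cdot,t)\|_{H^3}^2+\|\phi(\cdot,t)\|_{H^6}^2$ (in practice via $\|\nabla^2\omega\|_{L^2}^2+\|\Delta^3\phi\|_{L^2}^2$ plus the Step 1 control of lower norms). After applying the appropriate derivatives to \eqref{eq1.5} and \eqref{eq1.7}, testing, and using the Kato--Ponce commutator estimates, the only genuinely critical contribution from the Navier--Stokes side is the top-order part of the convection term, estimated by $C\|\nabla u\|_{L^\infty}\mathcal{E}_2$. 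Here the device of \cite{BKM84} (in the refined form of \cite{KT00,KOT02}) enters through the logarithmic Sobolev inequality
\[
  \|\nabla u\|_{L^\infty}\le C\big(1+\|\omega\|_{L^2}+\|\omega\|_{L^\infty}\log(e+\|u\|_{H^3})\big).
\]
The terms coupling $u$ and $\phi$ --- derivatives of $w\nabla\phi$ tested against derivatives of $u$, and derivatives of $u\cdot\nabla\phi$ tested against derivatives of $\phi$ --- are, thanks to the Step 1 bounds $\phi\in L^\infty(0,T_*;H^3_{per})$, $u\in L^\infty(0,T_*;H^1_{per})$ and the bi-Laplacian smoothing, handled by interpolation and Young's inequality so that the highest $\phi$- and $u$-derivatives are absorbed into the dissipations $\gamma k\varepsilon\|\Delta^4\phi\|_{L^2}^2$ and $\mu\|\nabla^3\omega\|_{L^2}^2$, leaving only a term bounded by $C(1+\|\nabla u\|_{L^\infty})\mathcal{E}_2$ and an $L^1(0,T_*)$ remainder $h(t)$ built from the Step 1 dissipation integrals. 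Combining with the logarithmic Sobolev inequality and putting $X(t):=e+\mathcal{E}_2(t)$, we reach
\[
  \frac{d}{dt}X(t)\le C\big(1+\|\omega(\cdot,t)\|_{L^\infty}\big)\,X(t)\log X(t)+C\,h(t),
\]
whence an Osgood-type argument yields $\log X(t)\le\big(\log X(0)+C\int_0^t h(s)\,ds\big)\exp\big(C\int_0^t(1+\|\omega(\cdot,s)\|_{L^\infty})\,ds\big)$, finite for $t<T_*$ because $A<\infty$ and $T_*<\infty$. Hence $\sup_{[0,T_*)}\mathcal{E}_2<\infty$, contradicting maximality of $T_*$, which proves \eqref{eq1.18}. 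The final assertion is immediate: were $\limsup_{t\nearrow T_*}\|\omega(\cdot,t)\|_{L^\infty}$ finite, then with $T_*<\infty$ we would have $\int_0^{T_*}\|\omega(\cdot,t)\|_{L^\infty}\,dt<\infty$.

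I expect the main obstacle to be the $\phi$-coupling in Step 2: one must organize the energy identities so that the fourth-order chemical potential $w=\delta E(\phi)/\delta\phi$ --- whose derivatives are of order up to seven in $\phi$ --- interacts with the top derivatives of $u$ and of $\phi$ without generating any term stronger than the available bi-Laplacian and viscous dissipations, and so that every non-absorbed term is ultimately routed through the logarithmic Sobolev inequality, leaving $\|\omega\|_{L^\infty}$ (rather than $\|\nabla u\|_{L^\infty}$) as the only driving quantity. Setting up the precise chain of Gagliardo--Nirenberg interpolations, and checking at each order that the Step 1 bounds actually suffice to close the estimate, is the technical heart of the argument; the purely Navier--Stokes contributions run parallel to \cite{BKM84}.
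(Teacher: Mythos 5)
Your proposal follows the same Beale--Kato--Majda scheme as the paper: a continuation argument reduced to the a priori bound on $\|u\|_{H^{3}}+\|\phi\|_{H^{6}}$, a first-order step controlling $\|\omega\|_{L^{2}}^{2}+\|\nabla\Delta\phi\|_{L^{2}}^{2}$ by Gr\"onwall using $\int_{0}^{T_{*}}\|\omega\|_{L^{\infty}}\,dt<\infty$ together with the basic energy law, and a top-order step closed through the Kato--Ponce commutator estimate, the logarithmic Sobolev inequality, and a Gr\"onwall inequality for the logarithm of the energy. The one place where you genuinely diverge is the choice of top-order $\phi$-functional: you propose to evolve $\|\Delta^{3}\phi\|_{L^{2}}^{2}$ directly, whereas the paper evolves $\|\Delta\frac{\delta E}{\delta\phi}\|_{L^{2}}^{2}$ and only at the very end converts back via $\|\phi\|_{H^{6}}^{2}\le C(\|\Delta\frac{\delta E}{\delta\phi}\|_{L^{2}}^{2}+1)$. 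The paper's choice is what tames the coupling you identify as the main obstacle: since $\partial_{t}\phi=-u\cdot\nabla\phi-\gamma\frac{\delta E}{\delta\phi}$, differentiating $\Delta\frac{\delta E}{\delta\phi}$ in time and testing against itself produces the clean negative term $-k\varepsilon\gamma\|\Delta^{2}\frac{\delta E}{\delta\phi}\|_{L^{2}}^{2}$ into which all the high-order-in-$\phi$ contributions are absorbed, and the single non-absorbable cross term $\tilde{C}\|\Delta^{2}u\|_{L^{2}}^{2}$ is handled by weighting the $\phi$-inequality with $\eta=\mu/(2\tilde{C})$ before adding it to the $u$-inequality so that it is swallowed by the viscous dissipation; working with $\Delta^{3}\phi$ instead forces you to differentiate the full nonlinearity $g(\phi)$ many times, which is doable but is precisely the unwritten ``technical heart'' of your sketch. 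One small imprecision in your Step 1: the Gr\"onwall coefficient cannot be just $C(1+\|\omega\|_{L^{\infty}})$ with an additive constant, because the coupling term $\frac{\delta E}{\delta\phi}\nabla\phi$ and the transport term force coefficients $\|\nabla u\|_{L^{2}}^{2}$ and $\|\frac{\delta E}{\delta\phi}\|_{L^{2}}^{2}$ multiplying the energy; these are admissible only because the basic energy law \eqref{eq1.12} places them in $L^{1}(0,T_{*})$, and that dependence should appear explicitly in the differential inequality.
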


The proof of Theorem \ref{th1.1} yields the following immediate
consequence:

\begin{corollary}\label{co1.2}
Assume that $(u_{0}, \phi_{0})\in H^{3}_{per}(Q)\times
H^{6}_{per}(Q)$ with $\nabla\cdot u_{0}=0$. Let $(u,\phi)$ be the
corresponding local strong solution to the system
\eqref{eq1.5}--\eqref{eq1.9} on [0,T) for some $0<T<\infty$. If
\begin{equation*}
\int_{0}^{T}\|\omega(\cdot,t)\|_{L^{\infty}}\;dt<\infty,
\end{equation*}
then the solution $(u,\phi)$ can be extended past time $t=T$.
\end{corollary}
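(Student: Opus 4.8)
\medskip
\noindent\textbf{Sketch of proof.} The plan is to prove the contrapositive, which is precisely Corollary~\ref{co1.2}: assuming $T_{*}<\infty$ and, toward a contradiction, that $\int_{0}^{T_{*}}\|\omega(\cdot,t)\|_{L^{\infty}}\,dt=:K<\infty$, one shows that $(u,\phi)$ stays bounded in $H^{3}_{per}(Q)\times H^{6}_{per}(Q)$ on $[0,T_{*})$, so that by the local well-posedness theory (\cite{WX12,LTT12}) the solution continues past $T_{*}$ — a contradiction. The starting point is the basic energy law \eqref{eq1.12}, which gives $u\in L^{\infty}(0,T_{*};L^{2})\cap L^{2}(0,T_{*};H^{1})$, $\sup_{t}E(\phi(\cdot,t))<\infty$ and $\int_{0}^{T_{*}}\big\|\tfrac{\delta E(\phi)}{\delta\phi}\big\|_{L^{2}}^{2}\,dt<\infty$. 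Via the penalty terms in \eqref{eq1.4} and the definitions \eqref{eq1.2}--\eqref{eq1.3}, boundedness of $E(\phi)$ controls $\int_{Q}\phi\,dx$, $\|\nabla\phi\|_{L^{2}}$ and $\|f(\phi)\|_{L^{2}}$; with Poincar\'e's inequality, elliptic regularity for $-\varepsilon\Delta\phi=f(\phi)-\varepsilon^{-1}(\phi^{3}-\phi)$, and $H^{1}(Q)\hookrightarrow L^{6}(Q)$ this yields $\phi\in L^{\infty}(0,T_{*};H^{2})\hookrightarrow L^{\infty}(Q\times(0,T_{*}))$, so every polynomial nonlinearity in \eqref{eq1.11} is tame at leading order.

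\emph{Step 1 ($L^{2}$ bounds for $\omega$ and $\nabla\Delta\phi$).} Apply $\nabla\times$ to \eqref{eq1.5} and test with $\omega$; at the same time test \eqref{eq1.7} with a suitable sixth-order multiplier (say $-\Delta^{3}\phi$), so that the principal part $-\gamma k\varepsilon\Delta^{2}\phi$ of $\tfrac{\delta E}{\delta\phi}$ produces the dissipation $\gamma k\varepsilon\|\nabla\Delta^{2}\phi\|_{L^{2}}^{2}$. For the vortex-stretching term, $\|\nabla u\|_{L^{2}}=\|\omega\|_{L^{2}}$ (valid for periodic divergence-free fields) gives $\big|\int(\omega\cdot\nabla u)\cdot\omega\,dx\big|\le\|\omega\|_{L^{\infty}}\|\omega\|_{L^{2}}^{2}$, so $\|\omega\|_{L^{\infty}}$ — rather than $\|\nabla u\|_{L^{\infty}}$ — suffices here. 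The coupling term $\int\nabla\times(\tfrac{\delta E}{\delta\phi}\nabla\phi)\cdot\omega\,dx$, the advection contribution of $u\cdot\nabla\phi$, and the lower-order polynomial parts of $\tfrac{\delta E}{\delta\phi}$ are handled by integration by parts, Gagliardo--Nirenberg interpolation in $\mathbb{R}^{3}$, Young's inequality (absorbing $\mu\|\nabla\omega\|_{L^{2}}^{2}$ and $\gamma k\varepsilon\|\nabla\Delta^{2}\phi\|_{L^{2}}^{2}$), the $L^{\infty}_{t}H^{2}_{x}$ bound on $\phi$, and $\int_{0}^{T_{*}}\|\tfrac{\delta E}{\delta\phi}\|_{L^{2}}^{2}\,dt<\infty$. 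This yields
\[
\frac{d}{dt}\Big(\|\omega\|_{L^{2}}^{2}+\|\nabla\Delta\phi\|_{L^{2}}^{2}\Big)+c\Big(\|\nabla\omega\|_{L^{2}}^{2}+\|\nabla\Delta^{2}\phi\|_{L^{2}}^{2}\Big)\le\big(C\|\omega\|_{L^{\infty}}+h(t)\big)\Big(1+\|\omega\|_{L^{2}}^{2}+\|\nabla\Delta\phi\|_{L^{2}}^{2}\Big),
\]
with $h\in L^{1}(0,T_{*})$; since $\int_{0}^{T_{*}}\|\omega\|_{L^{\infty}}\,dt<\infty$, Gronwall's inequality gives $u\in L^{\infty}(0,T_{*};H^{1})\cap L^{2}(0,T_{*};H^{2})$ and $\phi\in L^{\infty}(0,T_{*};H^{3})\cap L^{2}(0,T_{*};H^{5})$.

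\emph{Step 2 (higher energy estimates and the logarithmic Sobolev inequality).} Using $u\in L^{2}(0,T_{*};H^{2})$ from Step 1, the $4$th-order parabolic smoothing of \eqref{eq1.7} and $\phi_{0}\in H^{6}$, first upgrade $\phi$ to $L^{\infty}(0,T_{*};H^{4})\cap L^{2}(0,T_{*};H^{6})$. Then put $Y(t)=\|u(\cdot,t)\|_{H^{3}}^{2}+\|\phi(\cdot,t)\|_{H^{6}}^{2}$ and differentiate $\|u\|_{H^{3}}^{2}$ and $\|\phi\|_{H^{6}}^{2}$ in time along \eqref{eq1.5}, \eqref{eq1.7}; after using $\nabla\cdot u=0$, Kato--Ponce/Moser commutator estimates, integration by parts to redistribute derivatives (so that at most $H^{4}$ of $u$ and $H^{8}$ of $\phi$ occur, both supplied by the dissipation), absorption of top-order terms, and the Step-1 bounds (with $\phi\in L^{2}_{t}H^{6}$ used for the forcing $\tfrac{\delta E}{\delta\phi}\nabla\phi$ in \eqref{eq1.5}), the coupled system of differential inequalities reduces to
\[
\frac{d}{dt}Y(t)\le C\big(1+\|\nabla u(\cdot,t)\|_{L^{\infty}}\big)Y(t)+g(t),\qquad \int_{0}^{T_{*}}g(t)\,dt<\infty.
\]
Now invoke the logarithmic Sobolev inequality on $\mathbb{T}^{3}$,
\[
\|\nabla u\|_{L^{\infty}}\le C\Big(1+\|u\|_{L^{2}}+\|\omega\|_{L^{\infty}}\log\big(e+\|u\|_{H^{3}}\big)\Big),
\]
together with $Y/(e+Y)\le 1$, to obtain $\tfrac{d}{dt}\log(e+Y)\le C\big(1+\|\omega\|_{L^{\infty}}\log(e+Y)\big)+g(t)$; Gronwall then gives $\log(e+Y(t))\le\big(\log(e+Y(0))+Ct+\int_{0}^{T_{*}}g\big)\exp\!\big(C\int_{0}^{t}\|\omega\|_{L^{\infty}}\big)$, which is finite on $[0,T_{*})$ because $T_{*}<\infty$ and $\int_{0}^{T_{*}}\|\omega\|_{L^{\infty}}=K<\infty$. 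Hence $Y$ is bounded on $[0,T_{*})$ — the desired contradiction, establishing \eqref{eq1.18}; the asserted $\limsup$ is immediate, and Corollary~\ref{co1.2} is the contrapositive.

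\emph{Main obstacle.} The consequences of the energy law, the logarithmic Sobolev inequality and the final Osgood/Gronwall step are routine; the real work lies in Step 2. One must choose the right combination of (possibly weighted) norms for the $H^{3}\times H^{6}$ energy so that \emph{every} high-order coupling term — in particular the forcing $\tfrac{\delta E}{\delta\phi}\nabla\phi$ in \eqref{eq1.5}, in which $\tfrac{\delta E}{\delta\phi}$ is of fourth order in $\phi$ and hence couples high derivatives of $\phi$ into the $u$-equation, together with the numerous polynomial nonlinearities displayed in \eqref{eq1.11} — is dominated by the biharmonic dissipation of \eqref{eq1.7} plus quantities already controlled in Step 1, leaving only a contribution proportional to $\|\nabla u\|_{L^{\infty}}$ times the energy. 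Making this bookkeeping close \emph{simultaneously} for $u$ and $\phi$, rather than any single estimate, is the crux.
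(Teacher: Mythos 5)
Your sketch follows essentially the same route as the paper: argue by contradiction via the local well-posedness/continuation criterion of \cite{WX12}, derive the $L^{2}$ bound on $(\omega,\nabla\Delta\phi)$ by Gronwall using $\int_{0}^{T}\|\omega\|_{L^{\infty}}\,dt<\infty$, then close a weighted $H^{3}\times H^{6}$-level energy estimate with the Kato--Ponce commutator bound and the Beale--Kato--Majda logarithmic Sobolev inequality followed by a second Gronwall. The one step you flag as the crux and leave open --- choosing the weighted combination of norms so that all top-order couplings close simultaneously --- is resolved in the paper by taking $\|\Delta\tfrac{\delta E}{\delta\phi}\|_{L^{2}}^{2}$ (equivalent to $\|\phi\|_{H^{6}}^{2}$ up to quantities already controlled) as the sixth-order energy for $\phi$ and weighting it by $\eta=\mu/(2\tilde{C})$, so that the term $\tilde{C}\|\Delta^{2}u\|_{L^{2}}^{2}$ produced by $\Delta^{2}(u\cdot\nabla\phi)$ is absorbed by the viscous dissipation $\mu\|\Delta^{2}u\|_{L^{2}}^{2}$ coming from the velocity equation.
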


In the next section, we give the proof of Theorem \ref{th1.1}.

\section{The proof of Theorem \ref{th1.1}}

We prove Theorem \ref{th1.1} by contradiction. Since $(u_{0},
\phi_{0})\in H^{3}_{per}(Q)\times H^{6}_{per}(Q)$ with $\nabla\cdot
u_{0}=0$, we deduce from \cite{WX12} that there exists $0<T<\infty$
such that the problem \eqref{eq1.5}--\eqref{eq1.9} admits a unique
strong solution $(u, \phi)$ satisfying
\begin{equation}\label{eq2.1}
\begin{cases}
  u\in C([0,T], H^{3}_{per}(Q))\cap L^{2}(0,T; H^{4}_{per}(Q))\cap H^{1}(0,T; H^{2}_{per}(Q)),\\
  \phi \in C([0,T], H^{6}_{per}(Q))\cap L^{2}(0,T; H^{8}_{per}(Q))\cap
  H^{1}(0,T; H^{4}_{per}(Q)).
\end{cases}
\end{equation}
Moreover, if $(u, \phi)$ does not belong to the class \eqref{eq2.1}
then it must be that
$$
  \limsup_{t\nearrow
  T}\big(\|u(\cdot,t)\|_{H^{3}}+\|\phi(\cdot,t)\|_{H^{6}}\big)=\infty.
$$
In fact, if this is not true, then there exists $C_{0}>0$ such that
$\|u(\cdot,t)\|_{H^{3}}+\|\phi(\cdot,t)\|_{H^{6}}\leq C_{0}$ for all
$t<T$. By the local existence theorem stated in \cite{WX12}, we can
start a solution at any time $t_{1}$ with initial value $(u(x,
t_{1}), \phi(x, t_{1}))$, and this solution will be regular for
$t_{1}<t<t_{1}+T_{0}(C_{0})$, with $T_{0}$ independent of $t_{1}$.
By choosing $t_{1}\in(T-T_{0}, T)$, we have then extended the
original solution past to the time $T$, contrary to the choice of T.

Therefore, it suffices to show that if the condition \eqref{eq1.18}
holds, then
\begin{equation}\label{eq2.2}
  \|u(\cdot,t)\|_{H^{3}}+\|\phi(\cdot,t)\|_{H^{6}}\leq C, \ \ 0\leq t\leq T
\end{equation}
for some constant $C$ depending only on $T$, $\|u_0\|_{H^{3}}$,
$\|\phi_0\|_{H^{6}}$, coefficients of the system
\eqref{eq1.5}--\eqref{eq1.9}, and
$$
  K:=\int_{0}^{T}\|\omega(\cdot,t)\|_{L^{\infty}}\;dt<\infty.
$$

Let us first establish $L^{2}$ estimate of the vorticity $\omega$
and $\nabla\Delta\phi$. Recall that we have already obtained the
following uniform estimates (cf. \cite{DLL07,WX12}):
\begin{equation}\label{eq2.3}
\begin{cases}
  \|u(\cdot,t)\|_{L^{2}}+\|\phi(\cdot, t)\|_{H^{2}}\leq C \ \ \text{for
  all}\ \
  t\geq0,\\
  \int_{0}^{+\infty}\big(\mu\|\nabla u(\cdot,t)\|_{L^{2}}^{2}+\gamma\|\frac{\delta E}{\delta \phi}(\cdot,t)\|_{L^{2}}^{2}\big)dt\leq
  C,
\end{cases}
\end{equation}
where $C$ is a constant depending only on $\|u_{0}\|_{L^{2}}$,
$\|\phi_{0}\|_{H^{2}}$ and coefficients of the system except the
viscosity $\mu$.

\begin{lemma}\label{le2.1}
Assume that $(u_{0}, \phi_{0})\in H^{3}_{per}(Q)\times
H^{6}_{per}(Q)$ with $\nabla\cdot u_{0}=0$. Let $(u,\phi)$ be the
corresponding local smooth solution to the system
\eqref{eq1.5}--\eqref{eq1.9} on [0,T) for some $0<T<\infty$. If
\begin{align}\label{eq2.4}
\int_{0}^{T}\|\omega(\cdot,t)\|_{L^{\infty}}\;dt=K<\infty,
\end{align}
then
\begin{align}\label{eq2.5}
  \sup_{0\leq t\leq T}\Big(\|\omega(\cdot,t)\|_{L^{2}}^{2}+\|\nabla\Delta \phi(\cdot,t)\|_{L^{2}}^{2}\Big)\leq C,
\end{align}
where $C$ is a constant depending only on $\|u_{0}\|_{H^{1}}$,
$\|\phi_{0}\|_{H^{3}}$, $K$ and coefficients of the system.
\end{lemma}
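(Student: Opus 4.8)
The plan is to perform energy estimates on the vorticity equation and on a suitable high-order quantity for $\phi$, treating the two equations simultaneously so that the coupling terms can be absorbed. First I would derive the vorticity equation by applying $\nabla\times$ to \eqref{eq1.5}: writing $\omega=\nabla\times u$, one gets
\begin{equation*}
  \partial_t\omega + u\cdot\nabla\omega - \omega\cdot\nabla u = \mu\Delta\omega + \nabla\times\Big(\tfrac{\delta E(\phi)}{\delta\phi}\nabla\phi\Big).
\end{equation*}
Multiplying by $\omega$ in $L^2$ and integrating by parts (the incompressibility and periodicity kill the transport term), the vortex-stretching term is bounded by $\|\nabla u\|_{L^\infty}\|\omega\|_{L^2}^2$, which by standard elliptic/Calder\'on--Zygmund estimates in the periodic setting is controlled by $\|\omega\|_{L^\infty}(1+\log(1+\|u\|_{H^3}))\|\omega\|_{L^2}^2$ via the logarithmic Sobolev inequality — but for this lemma it suffices to first bound it more crudely, or to note that $\|\nabla u\|_{L^2}\lesssim\|\omega\|_{L^2}$ and use interpolation. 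The forcing term $\nabla\times(\tfrac{\delta E(\phi)}{\delta\phi}\nabla\phi)$ must be estimated in terms of $\|\nabla\Delta\phi\|_{L^2}$ and lower-order norms of $\phi$, which are already controlled by \eqref{eq2.3}; here the key is that $\tfrac{\delta E(\phi)}{\delta\phi}$ is a fourth-order operator in $\phi$ of the form $k\varepsilon\Delta^2\phi+\text{l.o.t.}$, so after integration by parts one lands on $\|\nabla\Delta\phi\|_{L^2}$ times polynomial expressions in the already-bounded $H^2$ norm of $\phi$.

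For the $\phi$ part I would test equation \eqref{eq1.7} against the operator that produces $\|\nabla\Delta\phi\|_{L^2}^2$ after the dissipative term is integrated by parts. Since $\partial_t\phi+u\cdot\nabla\phi=-\gamma\,\tfrac{\delta E(\phi)}{\delta\phi}$ and the leading part of $\tfrac{\delta E(\phi)}{\delta\phi}$ is $k\varepsilon\Delta^2\phi$, pairing with $\Delta^2\phi$ (or equivalently with $-\Delta^3\phi$ in $L^2$, using periodicity to move derivatives) yields a good term $-\gamma k\varepsilon\|\Delta^2\phi\|_{L^2}^2$ on the right, against which the transport term $\int (u\cdot\nabla\phi)\Delta^2\phi$ and the lower-order nonlinear terms of $\tfrac{\delta E(\phi)}{\delta\phi}$ must be absorbed. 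The transport term is the crucial coupling: integrating by parts and using $\nabla\cdot u=0$, it splits into terms like $\int \nabla u * \nabla\phi * \nabla^3\phi$ which are bounded by $\|\nabla u\|_{L^\infty}\|\nabla\phi\|_{H^2}\|\nabla\Delta\phi\|_{L^2}$, i.e. again $\|\nabla u\|_{L^\infty}$ (hence ultimately $\|\omega\|_{L^\infty}$ up to log factors) multiplied by the quantities being estimated. The nonlinear polynomial terms in $\tfrac{\delta E(\phi)}{\delta\phi}$ are handled by Gagliardo--Nirenberg interpolation against the bounded $H^2$ norm and Young's inequality to absorb the highest derivatives into $\gamma k\varepsilon\|\Delta^2\phi\|_{L^2}^2$.

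Adding the two estimates, setting $Y(t)=\|\omega\|_{L^2}^2+\|\nabla\Delta\phi\|_{L^2}^2$, I expect to reach a differential inequality of the shape
\begin{equation*}
  \frac{d}{dt}Y(t) + c\big(\|\nabla\omega\|_{L^2}^2 + \|\Delta^2\phi\|_{L^2}^2\big) \le C\big(1+\|\omega(\cdot,t)\|_{L^\infty}\big)\,Y(t) + C,
\end{equation*}
possibly with a $(1+\log^+\!Y)$ factor that a version of the Brezis--Gallouet/logarithmic Sobolev inequality would let me tolerate; in the worst case one first proves a weaker a priori bound on $Y$ and feeds it back. Gr\"onwall's inequality with $\int_0^T\|\omega\|_{L^\infty}\,dt=K<\infty$ then gives $\sup_{[0,T]}Y(t)\le C$ with $C$ depending only on the stated data. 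The main obstacle is the transport term $\int(u\cdot\nabla\phi)\Delta^2\phi$ and the companion forcing term in the vorticity equation: one must organize the integration by parts so that every surviving factor is either one of the already-controlled low norms from \eqref{eq2.3}, or $\|\nabla u\|_{L^\infty}$ (convertible to $\|\omega\|_{L^\infty}$ modulo logs), or a highest-order derivative that can be absorbed by the dissipation — getting all three to line up cleanly is where the real work lies.
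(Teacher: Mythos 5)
Your overall architecture (curl the momentum equation, $L^{2}$ energy estimate for $\omega$, a third-order energy estimate for $\phi$, add, Gr\"onwall) matches the paper's. But there is a genuine gap in how you propose to control the coupling and stretching terms: you repeatedly reduce to $\|\nabla u\|_{L^{\infty}}$ and then say this is ``$\|\omega\|_{L^{\infty}}$ up to log factors.'' The logarithmic Sobolev inequality gives $\|\nabla u\|_{L^{\infty}}\leq C\big(1+\|\omega\|_{L^{2}}+\|\omega\|_{L^{\infty}}\ln(e+\|u\|_{H^{3}})\big)$, and $\|u\|_{H^{3}}$ is \emph{not} controlled by the quantity $Y(t)=\|\omega\|_{L^{2}}^{2}+\|\nabla\Delta\phi\|_{L^{2}}^{2}$ that this lemma estimates; it only becomes available in the subsequent higher-order step. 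So a Gr\"onwall inequality whose coefficient contains $\|\omega\|_{L^{\infty}}\ln(e+\|u\|_{H^{3}})$ does not close at this level, and your fallback of ``feeding back a weaker bound on $Y$'' does not help because the uncontrolled quantity is $\|u\|_{H^{3}}$, not $Y$. The whole point of the Beale--Kato--Majda two-step structure is that the $L^{2}$ vorticity estimate must avoid $\|\nabla u\|_{L^{\infty}}$ entirely. For the vortex-stretching term the correct move (which you mention only as a parenthetical alternative) is to put $L^{\infty}$ on one factor of $\omega$ and $L^{2}$ on the other two: $|\int\omega\cdot\nabla u\cdot\omega\,dx|\leq\|\omega\|_{L^{\infty}}\|\nabla u\|_{L^{2}}\|\omega\|_{L^{2}}\leq C\|\omega\|_{L^{\infty}}\|\omega\|_{L^{2}}^{2}$. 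For the transport term $\int\Delta(u\cdot\nabla\phi)\,\Delta^{2}\phi\,dx$ the paper does not use $\|\nabla u\|_{L^{\infty}}$ at all: after Young's inequality against the dissipation $k\varepsilon\gamma\|\nabla\Delta^{2}\phi\|_{L^{2}}^{2}$ (note the dissipation is at the $H^{5}$ level, one order higher than the $\|\Delta^{2}\phi\|_{L^{2}}^{2}$ you state), the surviving coefficient is $\|\nabla u\|_{L^{2}}^{2}$, which is admissible in Gr\"onwall precisely because the basic energy law \eqref{eq2.3} makes $\int_{0}^{T}\|\nabla u\|_{L^{2}}^{2}\,dt$ finite. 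That time-integrability of $\|\nabla u\|_{L^{2}}^{2}$ and of $\|\frac{\delta E}{\delta\phi}\|_{L^{2}}^{2}$ is the mechanism that closes the estimate, and your proposal never invokes it.

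A second, related gap concerns the forcing term $\nabla\times(\frac{\delta E}{\delta\phi}\nabla\phi)$ in the vorticity equation. You propose to expand $\frac{\delta E}{\delta\phi}=k\varepsilon\Delta^{2}\phi+\cdots$ and ``land on $\|\nabla\Delta\phi\|_{L^{2}}$ times bounded $H^{2}$ norms,'' but $\Delta^{2}\phi$ is an $H^{4}$-level object that is not controlled by $\|\nabla\Delta\phi\|_{L^{2}}$ and the $H^{2}$ bound, nor is it absorbed by the $\omega$-dissipation. The paper instead keeps $\frac{\delta E}{\delta\phi}$ intact, absorbs $\frac{\mu}{2}\|\nabla\omega\|_{L^{2}}^{2}$ by Young, and is left with $C\|\frac{\delta E}{\delta\phi}\|_{L^{2}}^{2}(\|\nabla\Delta\phi\|_{L^{2}}^{2}+1)$, again exploiting that $\|\frac{\delta E}{\delta\phi}\|_{L^{2}}^{2}$ lies in $L^{1}(0,T)$ by the energy identity. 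Without these two devices --- placing $L^{\infty}$ on $\omega$ rather than on $\nabla u$, and using the energy-law time-integrability of $\|\nabla u\|_{L^{2}}^{2}$ and $\|\frac{\delta E}{\delta\phi}\|_{L^{2}}^{2}$ as Gr\"onwall weights --- the differential inequality you write down cannot be closed under the sole hypothesis $\int_{0}^{T}\|\omega\|_{L^{\infty}}\,dt<\infty$.
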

\begin{proof}
Taking the curl on \eqref{eq1.5}, we obtain
\begin{equation}\label{eq2.6}
  \partial_{t}\omega-\mu\Delta
  \omega+u\cdot\nabla\omega=\omega\cdot\nabla
  u+\nabla\times(\frac{\delta E}{\delta\phi}\nabla\phi).
\end{equation}
Multiplying \eqref{eq2.6} by $\omega$ and integrating over $Q$,
\begin{equation}\label{eq2.7}
  \frac{1}{2}\frac{d}{dt}\|\omega\|_{L^{2}}^{2}+\mu\|\nabla\omega\|_{L^{2}}^{2}=\int_{Q}w\cdot\nabla
  u\cdot\omega dx-\int_{Q}\frac{\delta
  E}{\delta\phi}\nabla\phi\cdot\nabla\times\omega dx,
\end{equation}
where we have used the fact $\int_{Q}u\cdot\nabla\omega\cdot\omega
dx=0$ due to $\nabla\cdot u=0$. Since the Riesz operators are
bounded in $L^{2}$ and $\nabla
u=(-\Delta)^{-1}\nabla(\nabla\times\omega)$, we have $\|\nabla
u\|_{L^{2}}\leq C\|\omega\|_{L^{2}}$. This implies that
\begin{equation}\label{eq2.8}
  \Big{|}\int_{Q}w\cdot\nabla u\cdot\omega dx\Big{|}\leq
  C\|\omega\|_{L^{\infty}}\|\nabla u\|_{L^{2}}\|\omega\|_{L^{2}}\leq
  C\|\omega\|_{L^{\infty}}\|\omega\|_{L^{2}}^{2}.
\end{equation}
Applying Young's inequality and \eqref{eq2.3}, we have
\begin{align}\label{eq2.9}
  \Big{|}\int_{Q}\frac{\delta
  E}{\delta\phi}\nabla\phi\cdot\nabla\times\omega dx\Big{|}&\leq
  \frac{\mu}{2}\|\nabla\omega\|_{L^{2}}^{2}+C\|\frac{\delta
  E}{\delta\phi}\nabla\phi\|_{L^{2}}^{2}\nonumber\\
  &\leq
  \frac{\mu}{2}\|\nabla\omega\|_{L^{2}}^{2}+C\|\frac{\delta
  E}{\delta\phi}\|_{L^{2}}^{2}\|\nabla\phi\|_{L^{\infty}}^{2}\nonumber\\
  &\leq
  \frac{\mu}{2}\|\nabla\omega\|_{L^{2}}^{2}+C\|\frac{\delta
  E}{\delta\phi}\|_{L^{2}}^{2}\Big(\|\nabla\Delta\phi\|_{L^{2}}^{2}+\|\nabla\phi\|_{L^{2}}^{2}\Big)\nonumber\\
  &\leq
  \frac{\mu}{2}\|\nabla\omega\|_{L^{2}}^{2}+C\|\frac{\delta
  E}{\delta\phi}\|_{L^{2}}^{2}\Big(\|\nabla\Delta\phi\|_{L^{2}}^{2}+1\Big),
\end{align}
where we have used the Sobolev embedding $H^{2}(Q)\hookrightarrow
L^{\infty}(Q)$ and $\|\nabla\phi\|_{L^{\infty}}\leq
C\big(\|\nabla\Delta\phi\|_{L^{2}}+\|\nabla\phi\|_{L^{2}}\big)$.
Taking \eqref{eq2.8} and \eqref{eq2.9} into \eqref{eq2.7}, we obtain
\begin{equation}\label{eq2.10}
  \frac{d}{dt}\|\omega\|_{L^{2}}^{2}+\mu\|\nabla\omega\|_{L^{2}}^{2}\leq
  C\Big(\|\omega\|_{L^{\infty}}+\|\frac{\delta
  E}{\delta\phi}\|_{L^{2}}^{2}\Big)\Big(\|\omega\|_{L^{2}}^{2}+\|\nabla\Delta\phi\|_{L^{2}}^{2}+1\Big).
\end{equation}
\medskip

Taking $\Delta$ on \eqref{eq1.7}, multiplying the resultant by
$-\Delta^{2}\phi$, and integrating over $Q$, we obtain
\begin{align}\label{eq2.11}
  \frac{1}{2}\frac{d}{dt}\|\nabla\Delta\phi\|_{L^{2}}^{2}=-\int_{Q}\nabla\cdot(u\cdot\nabla\phi)\cdot\nabla\Delta^{2}\phi dx
  -\gamma\int_{Q}\nabla\frac{\delta
  E}{\delta\phi}\cdot\nabla\Delta^{2}\phi dx:=I_{1}+I_{2}.
\end{align}
By \eqref{eq2.3}, $I_{1}$ can be directly estimated as follows:
\begin{align}\label{eq2.12}
  I_{1}&\leq
  \frac{k\gamma\varepsilon}{8}\|\nabla\Delta^{2}\phi\|_{L^{2}}^{2}+C\|\nabla u\cdot\nabla\phi\|_{L^{2}}^{2}
  +C\|u\cdot\nabla^{2}\phi\|_{L^{2}}^{2}\nonumber\\
  &\leq \frac{k\gamma\varepsilon}{8}\|\nabla\Delta^{2}\phi\|_{L^{2}}^{2}+C\|\nabla u\|_{L^{2}}^{2}\|\nabla\phi\|_{L^{\infty}}^{2}
  +C\|u\|_{L^{6}}^{2}\|\nabla^{2}\phi\|_{L^{3}}^{2}\nonumber\\
  &\leq \frac{k\gamma\varepsilon}{8}\|\nabla\Delta^{2}\phi\|_{L^{2}}^{2}+C\|\nabla
  u\|_{L^{2}}^{2}(\|\nabla\Delta\phi\|_{L^{2}}^{2}+1)
  +C\|\nabla u\|_{L^{2}}^{2}\|\nabla^{2}\phi\|_{L^{2}}\|\nabla\Delta\phi\|_{L^{2}}\nonumber\\
  &\leq \frac{k\gamma\varepsilon}{8}\|\nabla\Delta^{2}\phi\|_{L^{2}}^{2}+C\|\nabla
  u\|_{L^{2}}^{2}(\|\nabla\Delta\phi\|_{L^{2}}^{2}+1),
\end{align}
where we have used the Sobolev embedding $H^{1}(Q)\hookrightarrow
L^{6}(Q)$ and the interpolation inequality
$\|\nabla^{2}\phi\|_{L^{3}}^{2}\leq
C\|\nabla^{2}\phi\|_{L^{2}}\|\nabla\Delta\phi\|_{L^{2}}$. For
$I_{2}$, since $A(\phi)$ and $B(\phi)$ are functions depending only
on time, by \eqref{eq1.10} and \eqref{eq1.11}, we have
\begin{align}\label{eq2.13}
  I_{2}&=-\gamma\int_{Q}\nabla\Big[kg(\phi)+M_{1}(A(\phi)-\alpha)+M_{2}(B(\phi)-\beta)f(\phi)\Big]\cdot\nabla\Delta^{2}\phi
  dx\nonumber\\
  &=k\gamma\int_{Q}\nabla\Delta f(\phi)\cdot\nabla\Delta^{2}\phi
  dx-\frac{k\gamma}{\varepsilon^{2}}\int_{Q}\nabla[(3\phi^{2}-1)f(\phi)]\cdot\nabla\Delta^{2}\phi dx\nonumber\\
  &\ \ \
  -M_{2}\gamma(B(\phi)-\beta)\int_{Q}\nabla f(\phi)\cdot\nabla\Delta^{2}\phi
  dx\nonumber\\&:=I_{21}+I_{22}+I_{23}.
\end{align}
Note that
$f(\phi)=-\varepsilon\Delta\phi+\frac{1}{\varepsilon}(\phi^{2}-1)\phi$,
by  \eqref{eq2.3}, we obtain
\begin{align}\label{eq2.14}
  I_{21}&=-k\varepsilon\gamma\|\nabla\Delta^{2}\phi\|_{L^{2}}^{2}+\frac{k\gamma}{\varepsilon}\int_{Q}\nabla\Delta(\phi^{3}-\phi)\cdot\nabla\Delta^{2}\phi
  dx\nonumber\\
  &\leq
  -\frac{7k\varepsilon\gamma}{8}\|\nabla\Delta^{2}\phi\|_{L^{2}}^{2}+C\|\nabla\Delta(\phi^{3}-\phi)\|_{L^{2}}^{2}\nonumber\\
  &\leq
  -\frac{7k\varepsilon\gamma}{8}\|\nabla\Delta^{2}\phi\|_{L^{2}}^{2}+C\Big(\|\phi\|_{L^{\infty}}^{4}\|\nabla\Delta\phi\|_{L^{2}}^{2}
  +\|\phi\|_{L^{\infty}}^{2}\|\nabla\phi\|_{L^{6}}^{2}\|\Delta\phi\|_{L^{3}}^{2}\nonumber\\
  &\ \ \
  +\|\nabla\phi\|_{L^{6}}^{6}+\|\nabla\Delta\phi\|_{L^{2}}^{2}\Big)\nonumber\\
  &\leq
  -\frac{7k\varepsilon\gamma}{8}\|\nabla\Delta^{2}\phi\|_{L^{2}}^{2}+C\big(\|\nabla\Delta\phi\|_{L^{2}}^{2}+1\big);
\end{align}
\begin{align}\label{eq2.15}
  I_{22}&=-\frac{6k\gamma}{\varepsilon^{2}}\int_{Q}\phi\nabla\phi f(\phi)\cdot\nabla\Delta^{2}\phi dx
  -\frac{k\gamma}{\varepsilon^{2}}\int_{Q}(3\phi^{2}-1)\nabla f(\phi)\cdot\nabla\Delta^{2}\phi dx\nonumber\\
  &\leq
  \frac{k\varepsilon\gamma}{8}\|\nabla\Delta^{2}\phi\|_{L^{2}}^{2}+C\Big(\|\phi\nabla\phi f(\phi)\|_{L^{2}}^{2}
  +\|(3\phi^{2}-1)\nabla f(\phi)\|_{L^{2}}^{2}\Big)\nonumber\\
  &\leq
  \frac{k\varepsilon\gamma}{8}\|\nabla\Delta^{2}\phi\|_{L^{2}}^{2}+
  C\Big(\|\phi\nabla\phi\Delta\phi\|_{L^{2}}^{2}+\|\phi^{2}(\phi^{2}-1)\nabla\phi\|_{L^{2}}^{2}\nonumber\\
  &\ \ \ +\|(3\phi^{2}-1)\nabla\Delta\phi\|_{L^{2}}^{2}
  +\|(3\phi^{2}-1)\nabla(\phi^{3}-\phi)\|_{L^{2}}^{2}
  \Big)\nonumber\\
  &\leq
  \frac{k\varepsilon\gamma}{8}\|\nabla\Delta^{2}\phi\|_{L^{2}}^{2}+C\Big(\|\phi\|_{L^{\infty}}^{2}\|\nabla\phi\|_{L^{6}}^{2}\|\Delta\phi\|_{L^{3}}^{2}
  +\|\phi\|_{L^{\infty}}^{4}\|\phi^{2}-1\|_{L^{\infty}}^{2}\|\nabla\phi\|_{L^{2}}^{2}\nonumber\\
  &\ \ \ +
  \|3\phi^{2}-1\|_{L^{\infty}}^{2}\|\nabla\Delta\phi\|_{L^{2}}^{2}+\|3\phi^{2}-1\|_{L^{\infty}}^{4}\|\nabla\phi\|_{L^{\infty}}^{2}\Big)
  \nonumber\\
  &\leq
  \frac{k\varepsilon\gamma}{8}\|\nabla\Delta^{2}\phi\|_{L^{2}}^{2}+C\big(\|\nabla\Delta\phi\|_{L^{2}}^{2}+1\big);
\end{align}
\begin{align}\label{eq2.16}
  I_{23}&=M_{2}\varepsilon\gamma(B(\phi)-\beta)\int_{Q}\nabla\Delta\phi\cdot\nabla\Delta^{2}\phi
  dx-\frac{M_{2}\gamma(B(\phi)-\beta)}{\varepsilon}\int_{Q}\nabla(\phi^{3}-\phi)\cdot\nabla\Delta^{2}\phi
  dx\nonumber\\
  &\leq
  \frac{k\varepsilon\gamma}{8}\|\nabla\Delta^{2}\phi\|_{L^{2}}^{2}+C(B(\phi)-\beta)^{2}\Big( \|\nabla\Delta\phi\|_{L^{2}}^{2}
  +\|\nabla(\phi^{3}-\phi)\|_{L^{2}}^{2}\Big)\nonumber\\
  &\leq
  \frac{k\varepsilon\gamma}{8}\|\nabla\Delta^{2}\phi\|_{L^{2}}^{2}+C\Big(\|\nabla\phi\|_{L^{2}}^{4}
  +\|\phi^{2}-1\|_{L^{2}}^{4}+1\Big)\Big(\|\nabla\Delta\phi\|_{L^{2}}^{2}+\|\phi^{2}-1\|_{L^{\infty}}^{2}\|\nabla\phi\|_{L^{2}}^{2}\Big)
  \nonumber\\
  &\leq
  \frac{k\varepsilon\gamma}{8}\|\nabla\Delta^{2}\phi\|_{L^{2}}^{2}+C\big(\|\nabla\Delta\phi\|_{L^{2}}^{2}+1\big).
\end{align}
Combining the above estimates \eqref{eq2.11}--\eqref{eq2.16}, we
obtain
\begin{equation}\label{eq2.17}
  \frac{d}{dt}\|\nabla\Delta\phi\|_{L^{2}}^{2}+k\varepsilon\gamma\|\nabla\Delta^{2}\phi\|_{L^{2}}^{2}\leq
  C\big(\|\nabla u\|_{L^{2}}^{2}+1\big)\big(\|\nabla\Delta\phi\|_{L^{2}}^{2}+1\big).
\end{equation}

Putting \eqref{eq2.10} and \eqref{eq2.17} together yield that
\begin{align}\label{eq2.18}
  \frac{d}{dt}\big(\|\omega\|_{L^{2}}^{2}&+\|\nabla\Delta\phi\|_{L^{2}}^{2}\big)+\mu\|\nabla\omega\|_{L^{2}}^{2}
  +k\varepsilon\gamma\|\nabla\Delta^{2}\phi\|_{L^{2}}^{2}\nonumber\\
  &\leq
  C\Big(\|\omega\|_{L^{\infty}}+\|\nabla u\|_{L^{2}}^{2}+\|\frac{\delta
  E}{\delta\phi}\|_{L^{2}}^{2}+1\Big)\Big(\|\omega\|_{L^{2}}^{2}+\|\nabla\Delta\phi\|_{L^{2}}^{2}+1\Big).
\end{align}
Applying Gronwall's inequality, we conclude that
\begin{align}\label{eq2.19}
  \|\omega(t)\|_{L^{2}}^{2}+\|\nabla\Delta\phi(t)\|_{L^{2}}^{2}\leq
  C_{0}\exp\Big\{C\int_{0}^{t}\big(\|\omega(s)\|_{L^{\infty}}+\|\nabla u(s)\|_{L^{2}}^{2}+\|\frac{\delta
  E}{\delta\phi}(s)\|_{L^{2}}^{2}+1\big)ds\Big\},
\end{align}
where
$C_{0}=\big(\|\omega_{0}\|_{L^{2}}^{2}+\|\nabla\Delta\phi_{0}\|_{L^{2}}^{2}\big)$.
By \eqref{eq2.3} and \eqref{eq2.4}, we get \eqref{eq2.5}. The proof
of Lemma \ref{le2.1} is complete.
\end{proof}
\medskip

Combining \eqref{eq2.3} and \eqref{eq2.5}, we can easily see that
\begin{align}\label{eq2.20}
  \sup_{0\leq t\leq T}\|\phi(\cdot,t)\|_{H^{3}}\leq C,
\end{align}
where $C$ is a constant depending only on $\|u_{0}\|_{H^{1}}$,
$\|\phi_{0}\|_{H^{3}}$, $K$ and coefficients of the system.  By the
Sobolev embedding $H^{2}(Q)\hookrightarrow L^{\infty}(Q)$,
\eqref{eq2.20} implies that
\begin{align}\label{eq2.21}
  \sup_{0\leq t\leq T}\|\nabla\phi(\cdot,t)\|_{L^{\infty}}\leq C.
\end{align}
This result will be used frequently in the proof of Theorem
\ref{th1.1}.

\medskip

Next, let us derive higher order energy estimates of the solution
$(u,\phi)$. Taking $\nabla\Delta$ on \eqref{eq1.5}, multiplying the
resultant with $\nabla\Delta u$ and integrating over $Q$, we obtain
\begin{align}\label{eq2.22}
  \frac{1}{2}\frac{d}{dt}\|\nabla\Delta
  u\|_{L^{2}}^{2}+\mu\|\Delta^{2}u\|_{L^{2}}^{2}&=-\int_{Q}\nabla\Delta(u\cdot\nabla
  u)\cdot\nabla\Delta u dx+\int_{Q}\nabla\Delta(\frac{\delta E}{\delta
  \phi}\nabla\phi)\cdot\nabla\Delta u dx\nonumber\\
  &:=J_{1}+J_{2}.
\end{align}
Since $\nabla\cdot u=0$, $J_{1}$ can be rewritten as
\begin{equation*}
  J_{1}=-\int_{Q}\big[\nabla\Delta(u\cdot\nabla
  u)-u\cdot\nabla\nabla\Delta u\big]\cdot\nabla\Delta u dx.
\end{equation*}
By using the following commutator estimate due to Kato and Ponce
(see \cite{KP88}),
\begin{equation*}
   \|\nabla^{3}(fg)-f\nabla^{3}g\|_{L^{2}}\leq
   C\big(\|\nabla
   f\|_{L^{\infty}}\|\nabla^{2}g\|_{L^{2}}+\|\nabla^{3}f\|_{L^{2}}\|g\|_{L^{\infty}}\big),
\end{equation*}
we can estimate $J_{1}$ as follows:
\begin{align}\label{eq2.23}
  J_{1}&\leq C\|\nabla\Delta(u\cdot\nabla
  u)-u\cdot\nabla\nabla\Delta u\|_{L^{2}}\|\nabla\Delta
  u\|_{L^{2}}\nonumber\\
  &\leq C\|\nabla u\|_{L^{\infty}}\|\nabla\Delta
  u\|_{L^{2}}^{2}.
\end{align}
For $J_{2}$, after integration by parts, by \eqref{eq2.5}, we obtain
\begin{align}\label{eq2.24}
  J_{2}&=-\int_{Q}\Delta(\frac{\delta E}{\delta
  \phi}\nabla\phi)\Delta^{2}u dx\nonumber\\
  &\leq \frac{\mu}{4}\|\Delta^{2}u\|_{L^{2}}^{2}+C\|\Delta(\frac{\delta E}{\delta
  \phi}\nabla\phi)\|_{L^{2}}^{2}\nonumber\\
  &\leq \frac{\mu}{4}\|\Delta^{2}u\|_{L^{2}}^{2}+C\Big(\|\Delta\frac{\delta E}{\delta
  \phi}\nabla\phi\|_{L^{2}}^{2}+2\|\nabla\frac{\delta E}{\delta
  \phi}\nabla^{2}\phi\|_{L^{2}}^{2}+\|\frac{\delta E}{\delta
  \phi}\nabla\Delta\phi\|_{L^{2}}^{2}\Big)\nonumber\\
  &\leq \frac{\mu}{4}\|\Delta^{2}u\|_{L^{2}}^{2}+C\Big(\|\Delta\frac{\delta E}{\delta
  \phi}\|_{L^{2}}^{2}\|\nabla\phi\|_{L^{\infty}}^{2}+\|\nabla\frac{\delta E}{\delta
  \phi}\|_{L^{3}}^{2}\|\nabla^{2}\phi\|_{L^{6}}^{2}+\|\frac{\delta E}{\delta
  \phi}\|_{L^{\infty}}^{2}\|\nabla\Delta\phi\|_{L^{2}}^{2}\Big)\nonumber\\
  &\leq \frac{\mu}{4}\|\Delta^{2}u\|_{L^{2}}^{2}+C\Big(\|\Delta\frac{\delta E}{\delta
  \phi}\|_{L^{2}}^{2}+\|\frac{\delta E}{\delta
  \phi}\|_{L^{2}}^{2}+1\Big).
\end{align}
Combining \eqref{eq2.22}--\eqref{eq2.24}, we deduce that
\begin{align}\label{eq2.25}
  \frac{d}{dt}\|\nabla\Delta
  u\|_{L^{2}}^{2}+\frac{3\mu}{2}\|\Delta^{2}u\|_{L^{2}}^{2}\leq C\Big(\|\nabla u\|_{L^{\infty}}+\|\frac{\delta E}{\delta
  \phi}\|_{L^{2}}^{2}+1\Big)\Big(\|\nabla\Delta
  u\|_{L^{2}}^{2}+\|\Delta\frac{\delta E}{\delta
  \phi}\|_{L^{2}}^{2}+1\Big).
\end{align}
\medskip

To obtain the desired estimates for $\phi$, we start with
\begin{align}\label{eq2.26}
  \frac{1}{2}\frac{d}{dt}\|\Delta \frac{\delta
  E}{\delta\phi}\|_{L^{2}}^{2}&=\int_{Q}\frac{\partial}{\partial t}\Delta \frac{\delta
  E}{\delta\phi}\cdot\Delta \frac{\delta
  E}{\delta\phi}dx\nonumber\\
  &=\int_{Q}\frac{\partial}{\partial
  t}\Delta\big[kg(\phi)+M_{1}(A(\phi)-\alpha)+M_{2}(B(\phi)-\beta)f(\phi)\big]\cdot\Delta \frac{\delta
  E}{\delta\phi}dx\nonumber\\
  &=\int_{Q}\frac{\partial}{\partial
  t}\Delta\big[kg(\phi)+M_{2}(B(\phi)-\beta)f(\phi)\big]\cdot\Delta \frac{\delta
  E}{\delta\phi}dx\nonumber\\
  &=\int_{Q}\frac{\partial}{\partial
  t}\big[kg(\phi)+M_{2}(B(\phi)-\beta)f(\phi)\big]\cdot\Delta^{2} \frac{\delta
  E}{\delta\phi}dx\nonumber\\
  &=-k\int_{Q}\frac{\partial}{\partial
  t}\Delta f(\phi)\cdot\Delta^{2} \frac{\delta
  E}{\delta\phi}dx+\frac{k}{\varepsilon^{2}}\int_{Q}\frac{\partial}{\partial
  t}\big[(3\phi^{2}-1)f(\phi)\big]\cdot\Delta^{2} \frac{\delta
  E}{\delta\phi}dx\nonumber\\
  &\ \ \ +M_{2}\frac{d}{dt}B(\phi)\int_{Q}f(\phi)\cdot\Delta^{2}\frac{\delta
  E}{\delta\phi}dx+M_{2}(B(\phi)-\beta)\int_{Q}\frac{\partial}{\partial
  t}f(\phi)\cdot\Delta^{2}\frac{\delta
  E}{\delta\phi}dx\nonumber\\
  &:=K_{1}+K_{2}+K_{3}+K_{4}.
\end{align}
Now we are in a position to estimate the terms $K_{i}$ $(i=1,2,3,4)$
one by one. For $K_{1}$, we split it  into the following two parts:
\begin{align}\label{eq2.27}
  K_{1}&=k\varepsilon\int_{Q}\Delta^{2}\frac{\partial \phi}{\partial
  t}\cdot\Delta^{2} \frac{\delta
  E}{\delta\phi}dx-\frac{k}{\varepsilon}\int_{Q}\frac{\partial}{\partial
  t}\Delta(\phi^{3}-\phi)\cdot\Delta^{2} \frac{\delta
  E}{\delta\phi}dx\nonumber\\
  &:=K_{11}+K_{12}.
\end{align}
For $K_{11}$, by using Leibniz's rule, we deduce from \eqref{eq1.7}
that
\begin{align}\label{eq2.28}
  K_{11}&=-k\varepsilon\gamma\|\Delta^{2}\frac{\delta
  E}{\delta\phi}\|_{L^{2}}^{2}-k\varepsilon\int_{Q}\Delta^{2}(u\cdot\nabla\phi)\cdot\Delta^{2}\frac{\delta
  E}{\delta\phi}dx\nonumber\\
  &\leq -\frac{15k\varepsilon\gamma}{16}\|\Delta^{2}\frac{\delta
  E}{\delta\phi}\|_{L^{2}}^{2}+C\|\Delta^{2}(u\cdot\nabla\phi)\|_{L^{2}}^{2}\nonumber\\
  &\leq -\frac{15k\varepsilon\gamma}{16}\|\Delta^{2}\frac{\delta
  E}{\delta\phi}\|_{L^{2}}^{2}+C\Big(\|\Delta^{2}u\cdot\nabla\phi\|_{L^{2}}^{2}+4\|\nabla\Delta u\cdot\nabla\nabla\phi\|_{L^{2}}^{2}\nonumber\\
  &\ \ \ +6\|\Delta u\cdot\nabla\Delta\phi\|_{L^{2}}^{2}+4\|\nabla u\cdot\nabla\nabla\Delta\phi\|_{L^{2}}^{2}
  +\|u\cdot\nabla\Delta^{2}\phi\|_{L^{2}}^{2}\Big)\nonumber\\
  &\leq -\frac{15k\varepsilon\gamma}{16}\|\Delta^{2}\frac{\delta
  E}{\delta\phi}\|_{L^{2}}^{2}+C\Big(\|\nabla\phi\|_{L^{\infty}}^{2}\|\Delta^{2}u\|_{L^{2}}^{2}+\|\nabla\Delta
  u\|_{L^{3}}^{2}\|\nabla^{2}\phi\|_{L^{6}}^{2}\nonumber\\
  &\ \ \ +\|\Delta
  u\|_{L^{6}}^{2}\|\nabla\Delta\phi\|_{L^{3}}^{2}+\|\nabla
  u\|_{L^{\infty}}^{2}\|\Delta^{2}\phi\|_{L^{2}}^{2}+\|u\|_{L^{3}}^{2}\|\nabla^{5}\phi\|_{L^{6}}^{2}\Big)\nonumber\\
  &\leq -\frac{15k\varepsilon\gamma}{16}\|\Delta^{2}\frac{\delta
  E}{\delta\phi}\|_{L^{2}}^{2}+\tilde{C}\|\Delta^{2}u\|_{L^{2}}^{2}+C(\|\frac{\delta E}{\delta\phi}\|_{L^{2}}^{2}+1)(\|\nabla\Delta
  u\|_{L^{2}}^{2}+1)\nonumber\\
  &\ \ \ +C(\|\nabla u\|_{L^{2}}^{2}+1)(\|\Delta\frac{\delta
  E}{\delta\phi}\|_{L^{2}}^{2}+1)\nonumber\\
  &\leq -\frac{15k\varepsilon\gamma}{16}\|\Delta^{2}\frac{\delta
  E}{\delta\phi}\|_{L^{2}}^{2}+\tilde{C}\|\Delta^{2}u\|_{L^{2}}^{2}\nonumber\\
  &\ \ \ +C\Big(\|\nabla u\|_{L^{2}}^{2}+\|\frac{\delta E}{\delta\phi}\|_{L^{2}}^{2}+1\Big)
  \Big(\|\nabla\Delta
  u\|_{L^{2}}^{2}+\|\Delta\frac{\delta
  E}{\delta\phi}\|_{L^{2}}^{2}+1\Big),
\end{align}
where we have used the facts $\|\Delta^{2}\phi\|_{L^{2}}^{2}\leq
C(\|\frac{\delta E}{\delta\phi}\|_{L^{2}}^{2}+1)$ and
$\|\nabla^{5}\phi\|_{L^{6}}^{2}\leq
C\|\nabla^{6}\phi\|_{L^{2}}^{2}\leq C(\|\Delta\frac{\delta
E}{\delta\phi}\|_{L^{2}}^{2}+1)$. We emphasize here that the
constant $\tilde{C}$ in \eqref{eq2.28} depending only on
$\|u_{0}\|_{H^{1}}$, $\|\phi_{0}\|_{H^{3}}$, $K$ and coefficients of
the system due to the estimate \eqref{eq2.5}. For $K_{12}$, by using
\eqref{eq1.7} again, we obtain
\begin{align}\label{eq2.29}
  K_{12}&=-\frac{k}{\varepsilon}\int_{Q}\frac{\partial}{\partial
  t}\Delta(\phi^{3}-\phi)\cdot\Delta^{2} \frac{\delta
  E}{\delta\phi}dx
  =-\frac{6k}{\varepsilon}\int_{Q}\frac{\partial\big(|\nabla\phi|^{2}\phi\big)}{\partial
  t}\cdot\Delta^{2}\frac{\delta
  E}{\delta\phi}dx\nonumber\\&\ \ \ -\frac{3k}{\varepsilon}\int_{Q}\frac{\partial\big(\phi^{2}\Delta\phi\big)}{\partial
  t}\cdot\Delta^{2}\frac{\delta
  E}{\delta\phi}dx+\frac{k}{\varepsilon}\int_{Q}\Delta\frac{\partial\phi}{\partial
  t}\cdot\Delta^{2}\frac{\delta
  E}{\delta\phi}dx\nonumber\\
  &=-\frac{12k}{\varepsilon}\int_{Q}\phi\nabla\phi\nabla\frac{\partial \phi}{\partial t}\cdot\Delta^{2}\frac{\delta
  E}{\delta\phi}dx-\frac{6k}{\varepsilon}\int_{Q}|\nabla\phi|^{2}\frac{\partial\phi}{\partial
  t}\cdot\Delta^{2}\frac{\delta
  E}{\delta\phi}dx\nonumber\\
  &\ \ \ -\frac{6k}{\varepsilon}\int_{Q}\phi\Delta\phi\frac{\partial\phi}{\partial
  t}\cdot\Delta^{2}\frac{\delta
  E}{\delta\phi}dx-\frac{3k}{\varepsilon}\int_{Q}\phi^{2}\Delta\frac{\partial\phi}{\partial
  t}\cdot\Delta^{2}\frac{\delta
  E}{\delta\phi}dx\nonumber\\
  &\ \ \ +\frac{k}{\varepsilon}\int_{Q}\Delta\frac{\partial\phi}{\partial
  t}\cdot\Delta^{2}\frac{\delta
  E}{\delta\phi}dx:=\sum_{i=1}^{5}K_{12i}.
\end{align}
Note that, by using \eqref{eq2.5}, we can easily deduce from
\eqref{eq1.7} that
\begin{align*}
  \|\frac{\partial\phi}{\partial
  t}\|_{L^{2}}&\leq C\Big(\|u\cdot\nabla\phi\|_{L^{2}}+\|\frac{\delta
  E}{\delta\phi}\|_{L^{2}}\Big)\leq C\Big(\|\frac{\delta
  E}{\delta\phi}\|_{L^{2}}+1\Big),
\end{align*}
\begin{align*}
  \|\nabla\frac{\partial\phi}{\partial
  t}\|_{L^{2}}&\leq C\Big(\|\nabla u\cdot\nabla\phi\|_{L^{2}}+\|u\cdot\nabla^{2}\phi\|_{L^{2}}+\|\nabla\frac{\delta
  E}{\delta\phi}\|_{L^{2}}\Big)\\
  &\leq C\Big(\|\nabla u\|_{L^{2}}\|\nabla \phi\|_{L^{\infty}}+\|u\|_{L^{3}}\|\nabla^{2}\phi\|_{L^{6}}+\|\nabla\frac{\delta
  E}{\delta\phi}\|_{L^{2}}\Big)\\
  &\leq C\Big(\|\nabla u\|_{L^{2}}+\|\frac{\delta
  E}{\delta\phi}\|_{L^{2}}+\|\Delta\frac{\delta
  E}{\delta\phi}\|_{L^{2}}+1\Big),
\end{align*}
\begin{align*}
  \|\Delta\frac{\partial\phi}{\partial
  t}\|_{L^{2}}&\leq C\Big(\|\Delta u\cdot\nabla\phi\|_{L^{2}}+\|\nabla u\cdot\nabla^{2}\phi\|_{L^{2}}+\| u\cdot\nabla\Delta\phi\|_{L^{2}}
  +\|\Delta\frac{\delta
  E}{\delta\phi}\|_{L^{2}}\Big)\\
  &\leq C\Big(\|\Delta u\|_{L^{6}}\|\nabla \phi\|_{L^{3}}+\|\nabla u\|_{L^{3}}\|\nabla^{2}\phi\|_{L^{6}}
  +\|u\|_{L^{\infty}}\|\nabla\Delta\phi\|_{L^{2}}+\|\Delta\frac{\delta
  E}{\delta\phi}\|_{L^{2}}\Big)\\
  &\leq C\Big(\|\nabla\Delta u\|_{L^{2}}+\|\Delta\frac{\delta
  E}{\delta\phi}\|_{L^{2}}+1\Big),
\end{align*}
\begin{align*}
  \|\frac{\partial f(\phi)}{\partial
  t}\|_{L^{2}} &\leq C\Big(\|\Delta\frac{\partial\phi}{\partial t}\|_{L^{2}}+\|\frac{\partial}{\partial
  t}(\phi^{3}-\phi)\|_{L^{2}}\Big)\\
  &\leq C\Big(\|\Delta\frac{\partial\phi}{\partial t}\|_{L^{2}}+3\|\phi\|_{L^{\infty}}^{2}\|\frac{\partial\phi}{\partial
  t}\|_{L^{2}}+\|\frac{\partial\phi}{\partial
  t}\|_{L^{2}}\Big)\\
  &\leq C\Big(\|\nabla\Delta u\|_{L^{2}}
  +\|\Delta\frac{\delta
  E}{\delta\phi}\|_{L^{2}}+\|\frac{\delta
  E}{\delta\phi}\|_{L^{2}}+1\Big).
\end{align*}
Hence, we can estimate the terms $K_{12i}$ $(i=1,2,3,4,5)$ as
follows:
\begin{align}\label{eq2.30}
  K_{121}&\leq \frac{k\varepsilon\gamma}{16}\|\Delta^{2}\frac{\delta
  E}{\delta\phi}\|_{L^{2}}^{2}+C\|\phi\nabla\phi\nabla\frac{\partial\phi}{\partial
  t}\|_{L^{2}}^{2}\nonumber\\
  &\leq \frac{k\varepsilon\gamma}{16}\|\Delta^{2}\frac{\delta
  E}{\delta\phi}\|_{L^{2}}^{2}+C\|\phi\|_{L^{\infty}}^{2}\|\nabla\phi\|_{L^{\infty}}^{2}\|\nabla\frac{\partial\phi}{\partial
  t}\|_{L^{2}}^{2}\nonumber\\
  &\leq \frac{k\varepsilon\gamma}{16}\|\Delta^{2}\frac{\delta
  E}{\delta\phi}\|_{L^{2}}^{2}+C\Big(\|\nabla
  u\|_{L^{2}}^{2}
  +\|\Delta\frac{\delta E}{\delta\phi}\|_{L^{2}}^{2}+\|\frac{\delta
  E}{\delta\phi}\|_{L^{2}}^{2}\Big);
\end{align}
\begin{align}\label{eq2.31}
  K_{122}&\leq \frac{k\varepsilon\gamma}{16}\|\Delta^{2}\frac{\delta
  E}{\delta\phi}\|_{L^{2}}^{2}+C\|\nabla\phi\|_{L^{\infty}}^{4}\|\frac{\partial\phi}{\partial
  t}\|_{L^{2}}^{2}\nonumber\\
  &\leq \frac{k\varepsilon\gamma}{16}\|\Delta^{2}\frac{\delta
  E}{\delta\phi}\|_{L^{2}}^{2}+C\Big(\|\frac{\delta
  E}{\delta\phi}\|_{L^{2}}^{2}+1\Big);
\end{align}
\begin{align}\label{eq2.32}
  K_{123}&\leq \frac{k\varepsilon\gamma}{16}\|\Delta^{2}\frac{\delta
  E}{\delta\phi}\|_{L^{2}}^{2}+C\|\phi\|_{L^{\infty}}^{2}\|\frac{\partial\phi}{\partial
  t}\|_{L^{3}}^{2}\|\Delta\phi\|_{L^{6}}^{2}\nonumber\\
  &\leq \frac{k\varepsilon\gamma}{16}\|\Delta^{2}\frac{\delta
  E}{\delta\phi}\|_{L^{2}}^{2}+C\Big(\|u\cdot\nabla\phi\|_{L^{3}}^{2}+\|\frac{\delta
  E}{\delta\phi}\|_{L^{3}}^{2}\Big)\nonumber\\
  &\leq \frac{k\varepsilon\gamma}{16}\|\Delta^{2}\frac{\delta
  E}{\delta\phi}\|_{L^{2}}^{2}+C\Big(\|u\|_{L^{6}}^{2}\|\nabla\phi\|_{L^{6}}^{2}+\|\Delta\frac{\delta
  E}{\delta\phi}\|_{L^{2}}^{2}+\|\frac{\delta
  E}{\delta\phi}\|_{L^{2}}^{2}\Big)\nonumber\\
  &\leq \frac{k\varepsilon\gamma}{16}\|\Delta^{2}\frac{\delta
  E}{\delta\phi}\|_{L^{2}}^{2}+C\Big(\|\nabla u\|_{L^{2}}^{2}+\|\Delta\frac{\delta
  E}{\delta\phi}\|_{L^{2}}^{2}+\|\frac{\delta
  E}{\delta\phi}\|_{L^{2}}^{2}\Big);
\end{align}
\begin{align}\label{eq2.33}
  K_{124}&\leq \frac{k\varepsilon\gamma}{16}\|\Delta^{2}\frac{\delta
  E}{\delta\phi}\|_{L^{2}}^{2}+C\|\phi\|_{L^{\infty}}^{2}\|\Delta\frac{\partial\phi}{\partial
  t}\|_{L^{2}}^{2}\nonumber\\
  &\leq \frac{k\varepsilon\gamma}{16}\|\Delta^{2}\frac{\delta
  E}{\delta\phi}\|_{L^{2}}^{2}+C\Big(\|\nabla\Delta u\|_{L^{2}}^{2}+
  \|\Delta\frac{\delta
  E}{\delta\phi}\|_{L^{2}}^{2}+1\Big);
\end{align}
\begin{align}\label{eq2.34}
  K_{125}&\leq \frac{k\varepsilon\gamma}{16}\|\Delta^{2}\frac{\delta
  E}{\delta\phi}\|_{L^{2}}^{2}+C\|\Delta\frac{\partial\phi}{\partial
  t}\|_{L^{2}}^{2}\nonumber\\
  &\leq \frac{k\varepsilon\gamma}{16}\|\Delta^{2}\frac{\delta
  E}{\delta\phi}\|_{L^{2}}^{2}+C\Big(\|\nabla\Delta u\|_{L^{2}}^{2}+
  \|\Delta\frac{\delta
  E}{\delta\phi}\|_{L^{2}}^{2}+1\Big).
\end{align}
Putting estimates \eqref{eq2.30}--\eqref{eq2.34} together, we obtain
\begin{align}\label{eq2.35}
  K_{12}&\leq \frac{5k\varepsilon\gamma}{16}\|\Delta^{2}\frac{\delta
  E}{\delta\phi}\|_{L^{2}}^{2}+C\Big(\|\nabla\Delta u\|_{L^{2}}^{2}+
  \|\Delta\frac{\delta
  E}{\delta\phi}\|_{L^{2}}^{2}+\|\frac{\delta
  E}{\delta\phi}\|_{L^{2}}^{2}+1\Big).
\end{align}
Taking \eqref{eq2.28} and \eqref{eq2.35} into \eqref{eq2.27}, we get
\begin{align}\label{eq2.36}
  K_{1}&\leq -\frac{5k\varepsilon\gamma}{8}\|\Delta^{2}\frac{\delta
  E}{\delta\phi}\|_{L^{2}}^{2}+\tilde{C}\|\Delta^{2}u\|_{L^{2}}^{2}\nonumber\\
  &\ \ \ +C\Big(\|\nabla u\|_{L^{2}}^{2}+\|\frac{\delta E}{\delta\phi}\|_{L^{2}}^{2}+1\Big)
  \Big(\|\nabla\Delta u\|_{L^{2}}^{2}+
  \|\Delta\frac{\delta
  E}{\delta\phi}\|_{L^{2}}^{2}+1\Big).
\end{align}
For $K_{2}$,
\begin{align}\label{eq2.37}
  K_{2}&=\frac{6k}{\varepsilon^{2}}\int_{Q}\phi f(\phi)\frac{\partial\phi}{\partial
  t}\cdot\Delta^{2} \frac{\delta
  E}{\delta\phi}dx+\frac{k}{\varepsilon^{2}}\int_{Q}(3\phi^{2}-1)\frac{\partial f(\phi)}{\partial
  t}\cdot\Delta^{2} \frac{\delta
  E}{\delta\phi}dx\nonumber\\
  &=-\frac{6k}{\varepsilon}\int_{Q}\phi \Delta\phi\frac{\partial\phi}{\partial
  t}\cdot\Delta^{2} \frac{\delta
  E}{\delta\phi}dx+\frac{6k}{\varepsilon^{3}}\int_{Q}\phi^{2} (\phi^{2}-1)\frac{\partial\phi}{\partial
  t}\cdot\Delta^{2} \frac{\delta
  E}{\delta\phi}dx\nonumber\\
  &\ \ \ +\frac{k}{\varepsilon^{2}}\int_{Q}(3\phi^{2}-1)\frac{\partial f(\phi)}{\partial
  t}\cdot\Delta^{2} \frac{\delta
  E}{\delta\phi}dx\nonumber\\
  &\leq \frac{k\varepsilon\gamma}{8}\|\Delta^{2}\frac{\delta
  E}{\delta\phi}\|_{L^{2}}^{2}+C\Big(\|\phi\|_{L^{\infty}}^{2}\|\Delta\phi\|_{L^{6}}^{2}\|\frac{\partial\phi}{\partial
  t}\|_{L^{3}}^{2}
  +\|\phi\|_{L^{\infty}}^{4}\|\phi^{2}-1\|_{L^{\infty}}^{2}\|\frac{\partial\phi}{\partial t}\|_{L^{2}}^{2}\nonumber\\
  &\ \ \ +\|3\phi^{2}-1\|_{L^{\infty}}^{2}\|\frac{\partial f(\phi)}{\partial
  t}\|_{L^{2}}^{2}\Big)\nonumber\\
  &\leq \frac{k\varepsilon\gamma}{8}\|\Delta^{2}\frac{\delta
  E}{\delta\phi}\|_{L^{2}}^{2}+C\Big(\|\frac{\partial\phi}{\partial
  t}\|_{L^{3}}^{2}+\|\frac{\partial\phi}{\partial t}\|_{L^{2}}^{2}
  +\|\frac{\partial f(\phi)}{\partial
  t}\|_{L^{2}}^{2}\Big)\nonumber\\
  &\leq \frac{k\varepsilon\gamma}{8}\|\Delta^{2}\frac{\delta
  E}{\delta\phi}\|_{L^{2}}^{2}+C\Big(\|\nabla\Delta u\|_{L^{2}}^{2}+\|\Delta\frac{\delta E}{\delta \phi}\|_{L^{2}}^{2}
  +\|\frac{\delta E}{\delta \phi}\|_{L^{2}}^{2}+1\Big).
\end{align}
For $K_{3}$,
\begin{align}\label{eq2.38}
  K_{3}&\leq C\Big{|}\frac{d B(\phi)}{dt}\Big{|}\|f(\phi)\|_{L^{2}}\|\Delta^{2}\frac{\delta
  E}{\delta\phi}\|_{L^{2}}\nonumber\\
  &\leq \frac{k\varepsilon\gamma}{8}\|\Delta^{2}\frac{\delta
  E}{\delta\phi}\|_{L^{2}}^{2}+C\Big(\|\nabla\phi\|_{L^{2}}\|\nabla\frac{\partial \phi}{\partial t}\|_{L^{2}}
  +\|\phi^{3}-\phi\|_{L^{2}}\|\frac{\partial \phi}{\partial
  t}\|_{L^{2}}\Big)^{2}\|f(\phi)\|_{L^{2}}^{2}\nonumber\\
  &\leq \frac{k\varepsilon\gamma}{8}\|\Delta^{2}\frac{\delta
  E}{\delta\phi}\|_{L^{2}}^{2}+C\Big(\|\nabla u\|_{L^{2}}^{2}+\|\Delta\frac{\delta E}{\delta \phi}\|_{L^{2}}^{2}
  +\|\frac{\delta E}{\delta \phi}\|_{L^{2}}^{2}+1\Big).
\end{align}
For $K_{4}$,
\begin{align}\label{eq2.39}
  K_{4}&\leq C|B(\phi)-\beta|\|\frac{\partial f(\phi)}{\partial t}\|_{L^{2}}\|\Delta^{2}\frac{\delta
  E}{\delta\phi}\|_{L^{2}}\nonumber\\
  &\leq \frac{k\varepsilon\gamma}{8}\|\Delta^{2}\frac{\delta
  E}{\delta\phi}\|_{L^{2}}^{2}+C\Big(\|\nabla\phi\|_{L^{2}}^{2}
  +\|\phi^{2}-1\|_{L^{2}}^{2}\Big)^{2}\|\frac{\partial f(\phi)}{\partial t}\|_{L^{2}}^{2}\nonumber\\
  &\leq \frac{k\varepsilon\gamma}{8}\|\Delta^{2}\frac{\delta
  E}{\delta\phi}\|_{L^{2}}^{2}+C\Big(\|\nabla\Delta u\|_{L^{2}}^{2}+\|\Delta\frac{\delta E}{\delta \phi}\|_{L^{2}}^{2}
  +\|\frac{\delta E}{\delta \phi}\|_{L^{2}}^{2}+1\Big).
\end{align}
Taking \eqref{eq2.36}--\eqref{eq2.39} into \eqref{eq2.26}, we
conclude that
\begin{align}\label{eq2.40}
  \frac{d}{dt}\|\Delta \frac{\delta
  E}{\delta\phi}\|_{L^{2}}^{2}&+\frac{k\varepsilon\gamma}{2}\|\Delta^{2}\frac{\delta
  E}{\delta\phi}\|_{L^{2}}^{2}\leq \tilde{C}\|\Delta^{2}u\|_{L^{2}}^{2}\nonumber\\
  &\ \ \ +C\Big(\|\nabla u\|_{L^{2}}^{2}+\|\frac{\delta E}{\delta\phi}\|_{L^{2}}^{2}+1\Big)
  \Big(\|\nabla\Delta u\|_{L^{2}}^{2}+\|\Delta\frac{\delta
  E}{\delta\phi}\|_{L^{2}}^{2}+1\Big).
\end{align}
Set
$$
  \eta=\frac{\mu}{2\tilde{C}}.
$$
Multiplying $\eta$ by \eqref{eq2.40}, adding \eqref{eq2.25}
together, we obtain
\begin{align}\label{eq2.41}
  \frac{d}{dt}\Big(\|\nabla\Delta
  u\|_{L^{2}}^{2}&+\eta\|\Delta \frac{\delta
  E}{\delta\phi}\|_{L^{2}}^{2}\Big)+\mu\|\Delta^{2}u\|_{L^{2}}^{2}+\frac{k\varepsilon\gamma\eta}{2}\|\Delta^{2}\frac{\delta
  E}{\delta\phi}\|_{L^{2}}^{2}\nonumber\\
  &\leq C\Big(\|\nabla u\|_{L^{\infty}}+\|\nabla u\|_{L^{2}}^{2}+\|\frac{\delta E}{\delta\phi}\|_{L^{2}}^{2}+1\Big)
  \Big(\|\nabla \Delta u\|_{L^{2}}^{2}+\eta\|\Delta\frac{\delta
  E}{\delta\phi}\|_{L^{2}}^{2}+e\Big),
\end{align}
where the constant $C$ may depend on $\eta$. Putting the inequality
\eqref{eq2.41} and the basic energy inequality \eqref{eq1.12}
together imply that
\begin{align}\label{eq2.42}
  \frac{d}{dt}\Big(\|u\|_{H^{3}}^{2}\!+\!\eta\|\frac{\delta
  E}{\delta\phi}\|_{H^{2}}^{2}\Big)\!\leq\! C\Big(\|\nabla u\|_{L^{\infty}}\!+\!\|\nabla u\|_{L^{2}}^{2}
  \!+\!\|\frac{\delta E}{\delta\phi}\|_{L^{2}}^{2}\!+\!1\Big)
  \Big(\|u\|_{H^{3}}^{2}\!+\!\eta\|\frac{\delta
  E}{\delta\phi}\|_{H^{2}}^{2}\!+\!e\Big).
\end{align}
\medskip

Finally, we end the proof of Theorem \ref{th1.1} by applying
Gronwall's inequality. Set
$$
  m(t)=e+\|u\|_{H^{3}}^{2}+\eta\|\frac{\delta
  E}{\delta\phi}\|_{H^{2}}^{2}.
$$
Then by \eqref{eq2.42}, we see that
\begin{equation}\label{eq2.43}
  \frac{dm(t)}{dt}\leq C\Big(\|\nabla u\|_{L^{\infty}}+\|\nabla u\|_{L^{2}}^{2}+\|\frac{\delta E}{\delta\phi}\|_{L^{2}}^{2}+1\Big)m(t).
\end{equation}
By combining the following logarithmic
Sobolev inequality (see \cite{BKM84}):
$$
  \|\nabla u\|_{L^{\infty}}\leq
  C\big(1+\|\omega\|_{L^{2}}+\|\omega\|_{L^{\infty}}\ln(e+\|u\|_{H^{3}})\big)
$$
and \eqref{eq2.5}, we get
\begin{equation}\label{eq2.44}
  \|\nabla u\|_{L^{\infty}}\leq
  C\big(1+\|\omega\|_{L^{\infty}}\ln(e+\|u\|_{H^{3}})\big).
\end{equation}
This estimate with \eqref{eq2.43} and the inequality $\ln m(t)\geq
1$ yield that
\begin{align}\label{eq2.45}
  \frac{d}{dt}\ln m(t)\leq C\Big(\|\omega\|_{L^{\infty}}+\|\nabla u\|_{L^{2}}^{2}+\|\frac{\delta E}{\delta\phi}\|_{L^{2}}^{2}+1\Big)\ln m(t).
\end{align}
By applying Gronwall's inequality, we obtain
\begin{align}\label{eq2.46}
 m(t)\leq \exp\Big\{\ln m(0)\exp\big(C(1+t)+C\int_{0}^{t}\|\omega(s)\|_{L^{\infty}}ds\big)\Big\}
\end{align}
for any $0<t\leq T$, where $m(0)$ is a constant depending only on
$\|u_{0}\|_{H^{3}}$, $\|\phi_{0}\|_{H^{6}}$ and coefficients of the
system. Note that
\begin{equation*}
  \|\phi\|_{H^{6}}^{2}\leq C(\|\Delta\frac{\delta
  E}{\delta\phi}\|_{L^{2}}^{2}+1).
\end{equation*}
This fact with \eqref{eq2.46} imply immediately that
$$
  \sup_{0\leq t\leq T}\big(\|u(\cdot,t)\|_{H^{3}}+\|\phi(\cdot,t)\|_{H^{6}}\big)\leq
  C.
$$
We complete the proof of Theorem \ref{th1.1}.


\begin{thebibliography}{99}

\bibitem{ALV02} M. Abkarian, C. Lartigue, A. Viallat, Tank treading and unbinding of deformable
vesicles in shear flow: Determination of the lift force, Phys. Rev.
Lett. 88 (2002), 068103.

\bibitem{BKM84} J. T. Beale, T. Kato, A. Majda, Remarks on breakdown of smooth solutions for
the 3D Euler equations, Commun. Math. Phys. 94 (1984), 61--66.

\bibitem{B95} H. Beir\~{a}o da Veiga, A new regularity class for the Navier-Stokes equations in $\mathbb{R}^{n}$, Chin. Ann.
Math. Ser. B: 16 (1995), 407--412.

\bibitem{C02} R. S. Chadwick, Axisymmetric indentation of a thin incompressible elastic layer,
SIAM J. Appl. Math. 62(5) (2002), 1520--1530.

\bibitem{DBV97} K. H. de Haas, C. Blom, D. van den Ende, M. H. G. Duits, J.
Mellema, Deformation of giant lipid bilayer vesicles in shear flow,
Phys. Rev. E 56 (1997), 7132--7137.

\bibitem{DLL07} Q. Du, M. Li, C. Liu, Analysis of a phase field Navier-Stokes vesicle-fluid interaction
model, Discrete Contin. Dyn. Syst. B 8(3) (2007), 539--556.

\bibitem{DLRW05} Q. Du, C. Liu, R. Ryham, X. Wang, A phase field formulation of
the Willmore problem, Nonlinearity 18 (2005), 1249--1267.

\bibitem{DLRW051} Q. Du, C. Liu, R. Ryham, X. Wang, Phase field modeling of the
spontaneous curvature effect in cell membranes, Commun. Pure Appl.
Anal. 4 (2005), 537--548.

\bibitem{DLRW09} Q. Du, C. Liu, R. Ryham, X. Wang, Energetic variational approaches in modeling
vesicle and fluid interactions, Physica D 238 (2009), 923--930.

\bibitem{DLW04} Q. Du, C. Liu, X. Wang, A phase field approach in the numerical
study of the elastic bending energy for vesicle membranes, J.
Computational Physics 198 (2004), 450--468.

\bibitem{DLW05} Q. Du, C. Liu, X. Wang, Retrieving topological information for
phase field models, SIAM J. Appl. Math. 65 (2005), 1913--1932.

\bibitem{DLW06} Q. Du, C. Liu, X. Wang, Simulating the deformation of vesicle
membranes under elastic bending energy in three dimensions, J.
Computational Physics 212 (2006), 757--777.

\bibitem{H73} W. Helfrich, Elastic properties of lipid bilayers: theory and
possible experiments, Z. Naturforsch. 28 c (1973), 693--703.


\bibitem{G86} Y. Giga, Solutions for semilinear parabolic equations in $L^{p}$ and regularity of weak solutions of the
Navier-Stokes system, J. Differential Equations 61 (1986), 186--212.


\bibitem{KS05} V. Kantsler, V. Steinberg, Orientation and dynamics of a vesicle in tank-treading
motion in shear flow, Phys. Rev. Lett. 95 (2005), 258101.

\bibitem{KP88} T. Kato, G. Ponce, Commutator estimates and the Euler and Navier-Stokes equations,
Comm. Pure Appl. Math. 41 (1988), 891--907.

\bibitem{KOT02} H. Kozono, T. Ogawa, Y. Taniuchi, The critical
Sobolev inequalities in Besov spaces and regularity criterion to
some semi-linear evolution equations, Math. Z. 242 (2002), 251--278.

\bibitem{KT00} H. Kozono, Y. Taniuchi, Bilinear estimates in BMO and the Navier-Stokes equations,
Math. Z. 235 (2000), 173--194.



\bibitem{LTT12} Y. Liu, T. Takahashi, M. Tucsnak, Strong solution for a phase
field Navier-Stokes vesicle fluid interaction model, J. Math. Fluid
Mech. 14 (2011), 177--195.

\bibitem{MSWD94} L. Miao, U. Seifert, M. Wortis, H.-G D\"{o}bereiner, Budding transitions of fluid-bilayer vesicle:
The effect of area-difference elasticity, Phys. Rev. E 49 (1994),
5389--5407.

\bibitem{M05} O. Mouritsen, \textit{Life-As a Matter of Fat: The Emerging Science of
Lipidomics}, Springer, Berlin, 2005.

\bibitem{OH89} Z. Ou-Yang, W. Helfrich, Bending energy of vesicle membranes:
General expressions for the first, second and third variation of the
shape energy and applications to spheres and cylinders, Phys. Rev. A
39 (1989), 5280--5288.


\bibitem{P59} G. Prodi, Un teorema di unicit\`{a} per le equazioni di
Navier-Stokes, Ann. Math. Pura Appl. 48(1) (1959), 173--182.

\bibitem{S93} U. Seifert, Curvature-induced lateral phase segregation in two-component vesicles,
Phys. Rev. Lett. 70 (1993), 1335--1338.

\bibitem{SL95} U. Seifert, R. Lipowsky, \textit{Morphology of Vesicles}, in: Handbook of Biological Physics,
vol. 1, 1995.

\bibitem{S62} J. Serrin, On the interior regularity of weak solutions of the Navier-Stokes equations, Arch. Rational
Mech. Anal. 9 (1962), 187--195.

\bibitem{WD08} X. Wang, Q. Du, Modelling and simulations of multi-component
liquid membranes and open membranes via diffuse interface
approaches, J. Math. Biol. 56(3) (2008), 347--371.

\bibitem{WX12} H. Wu, X. Xu, Global regularity and stability of a hydrodynamic system
modeling vesicle and fluid interactions, arXiv:1202.4869v1.

\end{thebibliography}
\end{document}